\def\a{\alpha}
\def\b{\beta}
\def\d{\delta}
\def\de{\Delta}
\def\ga{\Gamma}
\def\iy{\infty}
\def\l{\lambda}
\def\la{\Lambda}
\def\s{\sigma}
\def\t{\tau}
\def\o{\omega}
\def\om{\Omega}
\def\ts{\times}
\def\iy{\infty}
\def\diag{{\rm diag\, }}
\def\BD{{\mathbb D}}
\def\BZ{{\mathbb Z}}
\newcommand{\sD}{{\mathcal D}}
\newcommand{\sE}{{\mathcal E}}\newcommand{\sF}{{\mathcal F}}
\newcommand{\sG}{{\mathcal G}}\newcommand{\sH}{{\mathcal H}}
\newcommand{\sK}{{\mathcal K}}\newcommand{\sL}{{\mathcal L}}
\newcommand{\sU}{{\mathcal U}}
\newcommand{\sX}{{\mathcal X}}
\newcommand{\sY}{{\mathcal Y}}
\newcommand{\eC}{{\mathbf C}}
\newcommand{\eM}{{\mathbb M}}
\newcommand{\eUM}{{\mathbb{UM}}}
\newcommand{\asU}{{\mathbf{U}}}
\newcommand{\asX}{{\mathbf{X}}}
\newcommand{\asY}{{\mathbf{Y}}}
\newcommand{\asD}{{\mathbf{D}}}
\newcommand{\asE}{{\mathbf{E}}}
\newcommand{\asF}{{\mathbf{F}}}
\newcommand{\asZ}{{\mathbf{Z}}}
\newcommand{\bpr}{{\noindent\textbf{Proof.}\ \ }}
\newcommand{\epr}{{\hfill $\Box$}}
\newtheorem{thm}{Theorem}[section]
\newtheorem{cor}[thm]{Corollary}
\newtheorem{prop}[thm]{Proposition}
\newtheorem{probl}{Problem}[section]
\newtheorem{proc}{Procedure}[section]
\newcommand{\half}{\frac{1}{2}}
\newcommand{\K}{K}
\newcommand{\mat}[2]{\ensuremath{\left[\begin{array}{#1}
#2
\end{array} \right]}}
\newcommand{\ov}[1]{{\overline{#1}}}
\newcommand{\ands}{\quad\mbox{and}\quad}
\newcommand{\tu}[1]{\textup{#1}}
\newcommand{\re}{\textup{Re\,}}
\date{}
\begin{document}

\title[A time-variant norm constrained interpolation problem]{A time-variant norm constrained
interpolation problem arising from relaxed commutant lifting}

\author[A.E. Frazho]{A.E. Frazho}

\address{%
Department of Aeronautics and Astronautics\\
Purdue University\\
West Lafayette, IN 47907, USA}

\email{frazho@ecn.purdue.edu}


\author[S. ter Horst]{S. ter Horst}

\address{%
Department of Mathematics\\
Virginia Tech\\
Blacksburg VA 24061, USA}

\email{terhorst@vt.edu}


\author[M.A. Kaashoek]{M.A. Kaashoek}

\address{%
Afdeling Wiskunde,
Faculteit der Exacte Wetenschappen\\
Vrije Universiteit\\
De Boelelaan 1081a, 1081 HV Amsterdam, The Netherlands}

\email{ma.kaashoek@few.vu.nl}


\subjclass{Primary 47A57; Secondary 47A20, 30E05}

\keywords{Norm constrained interpolation, contractions, upper
triangular operator matrices, positive real operator matrices,
harmonic majorants,  parametrization of all solutions.}

\begin{abstract}
A time-variant analogue of an interpolation problem equivalent to
the relaxed commutant lifting problem is introduced and studied. In
a somewhat less general form the problem already appears in the
analysis of the set of all solutions to the three chain completion
problem. The interpolants are upper triangular operator matrices of
which the columns induce contractive operators.  The set of all
solutions of the problem is described explicitly. The results
presented are time-variant analogues of the main theorems in
\cite{FtHK07b}.
\end{abstract}

\maketitle

\vspace{-1cm}
\setcounter{section}{-1}\setcounter{equation}{0}
\section{Introduction}\label{sec:intro}

Time-variant versions of metric constrained interpolation problems
and time-\linebreak varying linear system theory have been
intensively studied since the early 1990's; see the papers
\cite{ADw90,ADwD90,BGK92,BGK94,Dw08} and the books \cite{DwKV93,
HI94, FFGK98,DwvdV98} for a general overview and additional
references. The connection with commutant lifting theory was made in
\cite{FFGK97a}, where a time-varying analogue of the commutant
lifting theorem, known as the three chain completion theorem, was
proved. An early version of a time-variant commutant lifting theorem
appeared in Ball-Gohberg \cite{BG85}, which was later extended to
the setting of nest algebras in \cite{PP88} (see also \cite{D88});
the connection with the three chain theorem is explained in
\cite{B96}. One of the recent developments in commutant lifting
theory is the introduction of a relaxation of the commutant lifting
setting in \cite{FFK02}. In the present paper we consider a
time-variant norm constrained abstract interpolation problem, which
in the time-invariant case is equivalent to the relaxed commutant
lifting problem \cite{FtHK07b}.

To state the interpolation problem considered in this paper we need
some notation. Throughout $\sU_k$ and $\sY_k$ are Hilbert spaces
with $k$ being an arbitrary integer, and the symbols $\asU$ and
$\asY$ stand for the Hilbert direct sums $\oplus_{k\in \BZ}\, \sU_k$
and $\oplus_{k\in \BZ}\, \sY_k$, respectively. We shall consider
operator matrices $H=\left[H_{j,\,k}\right]_{{j,\,k}\in\BZ}$ of
which the $(j,\,k)$-th entry $H_{j,\,k}$ is an operator from $\sU_k$
into $\sY_j$. The set of all such operator matrices will be denoted
by $\eM({\asU},{\asY})$. By ${\eUM}({\asU},{\asY})$ we denote the
subset of $\eM({\asU},{\asY})$ consisting of all
$H=\left[H_{j,\,k}\right]_{{j,\,k}\in\BZ}$ that are \emph{upper
triangular}, that is, $H_{j,\,k}=0$ for each $k<j$.

In the present paper we are particularly interested in those
$H=\left[H_{j,\,k}\right]_{{j,\,k}\in\BZ}$ in $\eUM({\asU},{\asY})$
that have the additional property
\begin{equation}
\label{condH}\sum_{j=-\iy}^{k} \|H_{j,\,k}u_k\|^2\leq
c_H\|u_k\|^2,\quad u_k\in \sU_k \quad (k\in \BZ ),
\end{equation}
where $c_H$ is some constant depending on $H$ only. The set of all
such operator matrices is denoted by $\eUM^2({\asU},{\asY})$. We say
that $H$ belongs to $\eUM^2_{\textup{ball}}({\asU},{\asY})$ whenever
the constant $c_H$ can be taken equal to one. Thus an upper
triangular operator matrix $H$ belongs to
$\eUM^2_{\textup{ball}}({\asU},{\asY})$ if and only if for each
$k\in \BZ$ the $k$-th column of $H$ induces a contractive operator
from $\sU_k$ into $\asY=\oplus_{k\in \BZ}\, \sY_k$. The following is
the main problem treated in this paper.

\begin{probl}
\label{mainprobl}Assume that for each $k\in \BZ$ we have given a
subspace $\sF_k$ of $\sU_k$ and a contraction
\begin{equation}
\label{defomk} \o_k=\begin{bmatrix}
\o_{k,\,1}\\
\o_{k,\,2}
\end{bmatrix}:\sF_k\to \begin{bmatrix}
\sY_k\\\sU_{k-1}
\end{bmatrix}.
\end{equation}
Given this data, find  all
$H=\left[H_{j,\,k}\right]_{{j,\,k}\in\BZ}$ in
$\eUM^2_{\textup{ball}}({\asU},{\asY})$ such that for each $k\in
\BZ$ the following interpolation conditions hold:
\begin{equation}\label{intpolcond1}
H_{k,\,k}|_{\sF_k}=\o_{k,\,1}, \quad
H_{j,\,k}|_{\sF_k}=H_{j,\,k-1}\o_{k,\,2}\quad (j,k\in\BZ,j<k).
\end{equation}
\end{probl}

In the time-invariant case, the spaces $\sF_k=\sF$, $\sU_k=\sU$, and
$\sY_k=\sY$ and the contraction $\o_k=\o$ do not depend on $k$, and
the operators $H_{j,k}$ depend only on the difference $j-k$. In this
setting,  the above problem reduces to the function theory problem
considered in the first paragraph of \cite{FtHK07b}. To see this,
note that in this time-invariant setting the operator matrix $H$ can
be identified with the $\sL(\sU,\sY)$-valued function $F_H$,
analytic on the open unit disc $\BD$, given by
\[
F_H(\l)=\sum_{\nu=0}^\iy \l^\nu H_{-\nu}.
\]
Moreover in this case the interpolation condition
 and the norm constraint in Problem \ref{mainprobl} can be restated as
\[
\o_1+\l F_H(\l)\o_2=F_H(\l)|_{\sF}\quad (\l\in \BD)\ands
\sum_{\nu=0}^\iy \|H_{-\nu} u\|^2\leq \|u\|^2\quad (u\in \sU).
\]

For a particular choice of the contractions $\o_k$, Problem
\ref{mainprobl} appears in a natural way in the analysis of the set
of all solutions to the three chain completion problem
\cite{FFGK97a}, \cite{FFGK97b}. Indeed, see Section 4 in
\cite{FFGK97b} or Section XIV.3 in \cite{FFGK98}, where one can find
Problem \ref{mainprobl} with $\o_k$ being an isometry for each $k\in
\BZ$.

To state our first main result some additional notation is needed.
We use the symbol  $\eUM^\iy({\asU},{\asY})$ to denote the set of
all double infinite upper triangular operator matrices $H$ that
induce bounded linear operators from the Hilbert space
$\asU=\oplus_{k\in \BZ}\, \sU_k$ into the Hilbert space
$\asY=\oplus_{k\in \BZ}\, \sY_k$. If this induced operator is a
contraction, then we say that $H$ belongs to
$\eUM^\infty_\tu{ball}(\asU,\asY)$. In particular,
$\eUM^\infty_\tu{ball}(\asU,\asY)\subset\eUM^2_\tu{ball}({\asU},{\asY})$.
We write $\eUM^\infty_0(\asU,\asY)$ and
$\eUM^\infty_\tu{ball,0}(\asU,\asY)$ for the sets of all strictly
upper triangular operator matrices in $\eUM^\infty(\asU,\asY)$ and
$\eUM^\infty_\tu{ball}(\asU,\asY)$, respectively. Finally, when
$\sU_k=\sY_k$ for each $k\in \BZ$, and hence $\asU=\asY$, we shall
always replace the argument $(\asU, \asY)$ by $(\asU)$. Thus
$\eM(\asU)$ stands for $\eM(\asU, \asU)$, and $\eUM(\asU)$ stands
for $\eM(\asU, \asU)$, etc.  We are now ready to state the first
main result.

\begin{thm}
\label{thmmain1}For each $k\in \BZ$ let $\o_k$ be the contraction
given by \eqref{defomk}. Choose
\begin{equation}\label{Z12}
Z^{(1)}=\left[(Z^{(1)}_{j,\,k})\right]_{{j,\,k}\in\BZ} \in
\eUM^\iy(\asU,\asY), \
Z^{(2)}=\left[Z^{(2)}_{j,\,k}\right]_{{j,\,k}\in \BZ}\in
\eUM^\iy_0(\asU),
\end{equation}
such that
\begin{equation}\label{SchurClass}
\|Z^{(1)}u\|^2+\|Z^{(2)}u\|^2\leq \|u\|^2\quad (u\in{\asU})
\end{equation}
and
\begin{equation}
\label{condZ12} Z^{(1)}_{j,\,k}|_{\sF_k}=\left\{\begin{array}{cl}
\o_{k,\,1},&\mbox{if $j=k$},\\
0,&\mbox{if $j\not =k$},
\end{array}\right. \quad Z^{(2)}_{j,\,k}|_{\sF_k}=\left\{\begin{array}{cl}
\o_{k,\,2},&\mbox{if $j=k-1$},\\
0,&\mbox{if $j\not =k-1$}.
\end{array}\right.
\end{equation}
Then
\begin{equation} \label{HZ12}
H=Z^{(1)}(I_\asU- Z^{(2)})^{-1}
\end{equation}
is a well-defined upper triangular operator matrix, $H$ belongs to
 $\eUM^2_\tu{ball}(\asU,\asY)$, and $H$ is a solution of
\textup{Problem} \ref{mainprobl}.
\end{thm}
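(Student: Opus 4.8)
The plan is to regard $H=Z^{(1)}(I_\asU-Z^{(2)})^{-1}$ as a formal object in the algebra of upper triangular operator matrices and then extract the analytic bound column by column. First I would make sense of $(I_\asU-Z^{(2)})^{-1}$. Since $Z^{(2)}\in\eUM^\iy_0(\asU)$ is strictly upper triangular, in the $(j,k)$ entry of $(Z^{(2)})^n$ only chains $j=i_0<i_1<\cdots<i_n=k$ survive, so that entry vanishes as soon as $n>k-j$. Hence the Neumann series $\sum_{n\ge0}(Z^{(2)})^n$ terminates in every entry and defines an upper triangular operator matrix with identity diagonal that is a genuine two-sided inverse of $I_\asU-Z^{(2)}$ in that algebra. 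The product with $Z^{(1)}$ is again entrywise finite, so $H$ is a well-defined upper triangular operator matrix; multiplying \eqref{HZ12} on the right by $I_\asU-Z^{(2)}$ yields the identity $H=Z^{(1)}+HZ^{(2)}$.

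For the interpolation conditions I would read off this identity entrywise: $H_{j,\,k}=Z^{(1)}_{j,\,k}+\sum_m H_{j,\,m}Z^{(2)}_{m,\,k}$, a finite sum by triangularity. Restricting to $\sF_k$ and inserting \eqref{condZ12}, the only surviving term in the sum comes from $m=k-1$, where $Z^{(2)}_{k-1,\,k}|_{\sF_k}=\o_{k,\,2}$, so $H_{j,\,k}|_{\sF_k}=Z^{(1)}_{j,\,k}|_{\sF_k}+H_{j,\,k-1}\o_{k,\,2}$. For $j=k$ the first term equals $\o_{k,\,1}$ while $H_{k,\,k-1}=0$ by upper triangularity, giving $H_{k,\,k}|_{\sF_k}=\o_{k,\,1}$; for $j<k$ the first term vanishes and we obtain $H_{j,\,k}|_{\sF_k}=H_{j,\,k-1}\o_{k,\,2}$. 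This is exactly \eqref{intpolcond1}.

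The main work is showing $H\in\eUM^2_\tu{ball}(\asU,\asY)$, i.e. that each column is contractive. Fix $k$ and $u_k\in\sU_k$, and let $x\in\asU$ be $u_k$ in position $k$ and $0$ elsewhere. Since $(I_\asU-Z^{(2)})^{-1}$ is upper triangular, $y:=(I_\asU-Z^{(2)})^{-1}x$ is supported in positions $\le k$ with $y_k=u_k$, and the column norm to be bounded is $\|Z^{(1)}y\|^2=\sum_{j\le k}\|H_{j,\,k}u_k\|^2$. The obstacle is that $y$ is an a priori only formal, downward-infinite sequence whose square-summability is unclear, so the Schur bound \eqref{SchurClass} cannot be applied to $y$ directly.

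To get around this I would truncate. With $P_{<N}$ and $P_{\ge N}$ the coordinate projections onto positions $<N$ and $\ge N$, let $y^{[N]}$ agree with $y$ for $N\le j\le k$ and vanish below $N$. A short computation shows $(I_\asU-Z^{(2)})y^{[N]}=x-r^{[N]}$, where $r^{[N]}=P_{<N}Z^{(2)}y^{[N]}$ is supported in positions $<N$; consequently $y^{[N]}=x+P_{\ge N}Z^{(2)}y^{[N]}$, and the second summand is supported in $[N,k-1]$, hence orthogonal to $x$. Writing $w^{[N]}=Z^{(2)}y^{[N]}=r^{[N]}+P_{\ge N}w^{[N]}$ as an orthogonal sum and feeding $\|y^{[N]}\|^2=\|u_k\|^2+\|P_{\ge N}w^{[N]}\|^2$ into \eqref{SchurClass} gives
\[
\|Z^{(1)}y^{[N]}\|^2\le\|y^{[N]}\|^2-\|w^{[N]}\|^2=\|u_k\|^2-\|r^{[N]}\|^2\le\|u_k\|^2 .
\]
Finally $y^{[N]}\to y$ entrywise, so $Z^{(1)}y^{[N]}\to Z^{(1)}y$ entrywise; comparing any finite block of coordinates against the uniform bound, a lower-semicontinuity (Fatou) argument yields $\|Z^{(1)}y\|^2\le\|u_k\|^2$. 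Thus every column of $H$ is contractive, $H\in\eUM^2_\tu{ball}(\asU,\asY)$, and together with the preceding paragraphs $H$ solves Problem \ref{mainprobl}. The step I expect to require the most care is precisely this truncation-and-limit passage: establishing the orthogonal support splitting that turns the boundary term $r^{[N]}$ into a favorable contribution, and justifying the entrywise limit.
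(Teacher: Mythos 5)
Your proof is correct and is essentially the paper's argument: the paper establishes the column bound in Part 1 of the proof of Theorem \ref{th:UM2} by applying the Schur inequality to the finite sections $\de_{j,\,k}(Z^{(1)})$, $\de_{j,\,k}(Z^{(2)})$ acting on $v=\de_{j,\,k}\bigl((I_\asU-Z^{(2)})^{-1}\bigr)u$, and the telescoping cancellation there ($v_k=u_k$ together with $\de_{j,\,k}(Z^{(2)})v=v-u$) is exactly your orthogonal splitting of $w^{[N]}$ --- the only cosmetic difference being that the paper sections the matrices (via \eqref{prodABC} and \eqref{FinInv}), so your boundary term $r^{[N]}$ never arises, whereas you truncate the vector and must check that $r^{[N]}$ enters with a favorable sign. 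Your treatment of the interpolation conditions is likewise the entrywise form of Proposition \ref{pr:UM2intpol}, which derives \eqref{intpolcond1} from the same identity $H=Z^{(1)}+HZ^{(2)}$, written there in terms of the diagonal operator matrices $E$, $\om^{(1)}$, $\om^{(2)}$.
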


Let us explain why formula \eqref{HZ12} makes sense.  In general,
for arbitrary infinite operator matrices the usual matrix product is
not defined. The situation is different for upper triangular
matrices. For instance, for $A\in \eUM(\asU,\asY)$ and $B\in
\eUM(\asU)$ the matrix product $AB$ is well-defined and $AB$ belongs
to $\eUM(\asU,\asY)$. Furthermore, with the usual matrix product
$\eUM(\asU)$ is an algebra with the identity matrix $I_{\asU}$ as a
unit and an operator matrix $M=
\left[M_{j,\,k}\right]_{{j,\,k}\in\BZ}\in \eUM(\asU)$ is invertible
in $\eUM(\asU)$ if and only if for each $j\in \BZ$ the $j$-th
diagonal entry $M_{j,\,j}$ is invertible as an operator on $\sU_j$
(see Subsection \ref{ssecinvUM} below for further details). {}From
these remarks is clear that the operator $I_\asU- Z^{(2)}$ in
\eqref{HZ12} is invertible in $\eUM(\asU)$ and that the product in
\eqref{HZ12} is well-defined.

One can always find $Z^{(1)}$ and $Z^{(2)}$ satisfying the
conditions \eqref{Z12}, \eqref{SchurClass} and \eqref{condZ12} in
Theorem \ref{thmmain1}. For instance one can take
\begin{equation}
\label{specchoice} Z^{(1)}_{j,\,k}=\d_ {j,\,k}\o_{k,\,1}\Pi_{\sF_k}
\ands Z^{(2)}_{j,\,k}=\d_ {j,\,k-1}\o_{k,\,2}\Pi_{\sF_k}.
\end{equation}
Here $\d_ {j,\,k}$ is the Kronecker delta and the map
$\Pi_{\sF_{j}}$ is the orthogonal projection of $\sU_j$ onto
$\sF_{j}$. The solution of Problem \ref{mainprobl} corresponding to
this choice of $Z^{(1)}$ and $Z^{(2)}$ is given by
$\tilde{H}=[\tilde{H}_{j,k}]_{{j,\,k}\in\BZ}$ with
\begin{equation*}
\label{centsol} \tilde{H}_{j,k}= \left\{\begin{array}{cl}
0,&j>k,\\
\noalign{\vskip6pt}
\o_{k,\,1}\Pi_{\sF_k}, &j=k,\\
\noalign{\vskip6pt}
(\o_{j,\,1}\Pi_{\sF_j})(\o_{j+1,\,2}\Pi_{\sF_{j+1}}) \cdots
(\o_{k-1,\,2}\Pi_{\sF_{k-1}})(\o_{k,\,2}\Pi_{\sF_{k}}),&j<k.
\end{array}\right.
\end{equation*}
Thus Problem \ref{mainprobl} is always solvable.

Our second main result shows, in particular, that the method of
Theorem~\ref{thmmain1} gives all solutions to Problem
\ref{mainprobl}, that is, given a solution $H$  to Problem
\ref{mainprobl}, there exists
 a pair of operator matrices $(Z^{(1)}, Z^{(2)})$ satisfying
\eqref{Z12}, \eqref{SchurClass} and \eqref{condZ12} such that $H$ is
given by the formula \eqref{HZ12}. In general, such a pair
$(Z^{(1)}, Z^{(2)})$ is not uniquely determined by $H$. This
phenomenon already appears in the time-invariant case and can be
illustrated by simple examples. For instance,  assume all spaces
$\sF_k$ consist of the zero element only. In that case $H=0$ is in
$\eUM^2_\tu{ball}(\asU,\asY)$ and is a solution, while \eqref{HZ12}
holds with $Z^{(1)}=0$ and with any $Z^{(2)}$ from
$\eUM^\iy_0(\asU)$.

Given a solution $H$ to Problem \ref{mainprobl}, we shall describe
the set of all pairs $(Z^{(1)}, Z^{(2)})$ satisfying \eqref{Z12},
\eqref{SchurClass} and \eqref{condZ12} stated in the above theorem
such that $H$ is given by \eqref{HZ12}. The precise result is given
by Theorem \ref{thmmain2full} in Section \ref{sec:proof3}. Here we
only describe some of the main ingredients entering into the proof
and present an abbreviated version of this theorem.


Let $H$ be a  solution to Problem \ref{mainprobl}, and let $H_k$ be
the $k$-th column of $H$. Recall that $H_k$ defines a contraction
from $\sU_k$ into $\asY$. Let $D_{H_k}=(I_{\sU_k}-(H_k)^*H_k)^{1/2}$
denote  the corresponding defect operator,  and let $\sD_{H_k}$ be
the corresponding defect space, i.e., $\sD_{H_k}$ is the closure of
the range of $D_{H_k}$ in $\sU_k$. Since \eqref{intpolcond1} is
satisfied, for each $f\in \sF_k$ we have
\begin{eqnarray*}
\|D_{H_k}f\|^2 &=&\|f\|^2-\|H_kf\|^2
=\|f\|^2-\|\o_{k,\,1}f\|^2-\|H_{k-1}\o_{k,\,2}f\|^2\\
&=&\|f\|^2-\|\o_{k,\,1}f\|^2-\|\o_{k,\,2}f\|^2+\|\o_{k,\,2}f\|^2-\|H_{k-1}\o_{k,\,2}f\|^2\\
&=&\|D_{\o_k}f\|^2+\|D_{H_{k-1}}\o_{k,\,2}f\|^2.
\end{eqnarray*}
Hence we can define a contraction $\o_{H_k}$ by
\begin{equation}\label{defomHk}
\o_{H_k}:\sF_{H_k}:=\overline{D_{H_k}\sF_k}\to\sD_{H_{k-1}},\quad
\o_{H_k}D_{H_k}|_{\sF_k}=D_{H_{k-1}}\o_{k,\,2}.
\end{equation}
Now put $\asD_{H}=\oplus_{k\in \BZ}\sD_{H_k}$, and let $\eC_{H,\,
\o}$ be the set of all operator matrices
$C=\left[C_{j,\,k}\right]_{j,k\in\BZ}$ in
$\eUM^\infty_\tu{ball,0}(\asD_H)$ such that
\begin{equation}\label{KH}
C_{j,\,k}|_{\sF_{H_k}}=\left\{\begin{array}{cl}
\o_{H_k},&\mbox{if $j=k-1$},\\
0,&\mbox{if $j\not=k-1$}.
\end{array}\right.
\end{equation}
Thus
\begin{eqnarray} \label{paramCH}
&&\eC_{H,\, \o}=\{C\in \eUM^\infty_\tu{ball,0}(\asD_H) \mid
\mbox{the $(j,k)$-th entry $C_{j,\,k}$}\nonumber\\
\noalign{\vskip6pt} &&\hspace{3.5cm}\mbox{of $C$ satisfies
\eqref{KH} for each $j,k\in\BZ$}\}.
\end{eqnarray}
We can now state  the abbreviated version of our second main result.

\begin{thm}
\label{thmmain2} Let $H$ be a solution to \textup{Problem}
\ref{mainprobl}. Then there exists a pair of operator matrices
$(Z^{(1)}, Z^{(2)})$ satisfying \eqref{Z12}, \eqref{SchurClass} and
\eqref{condZ12} such that $H$ is given by \eqref{HZ12}. Furthermore,
the set of all such pairs $(Z^{(1)}, Z^{(2)})$ is in one-to-one
correspondence with the set $\eC_{H,\, \o}$.
\end{thm}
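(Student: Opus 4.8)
The plan is to prove both assertions---existence of a representing pair and the bijection with $\eC_{H,\,\o}$---simultaneously, by constructing an explicit correspondence and deducing existence from its non-emptiness. The first step is a reduction. Rewriting \eqref{HZ12} as $Z^{(1)}=H(I_\asU-Z^{(2)})=H-HZ^{(2)}$ shows that a representing pair is completely determined by its second component $Z^{(2)}\in\eUM^\iy_0(\asU)$, the first being forced. I would first observe that, once $Z^{(2)}$ satisfies the part of \eqref{condZ12} concerning it, the interpolation conditions on $Z^{(1)}$ in \eqref{condZ12} hold automatically: restricting $Z^{(1)}_{j,\,k}=H_{j,\,k}-\sum_l H_{j,\,l}Z^{(2)}_{l,\,k}$ to $\sF_k$ and using $Z^{(2)}_{l,\,k}|_{\sF_k}=\d_{l,\,k-1}\o_{k,\,2}$ together with the upper triangularity of $H$ gives $Z^{(1)}_{j,\,k}|_{\sF_k}=H_{j,\,k}|_{\sF_k}-H_{j,\,k-1}\o_{k,\,2}$, which equals $\o_{k,\,1}$ for $j=k$ and $0$ for $j\neq k$ precisely because $H$ solves \eqref{intpolcond1}. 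Thus the task becomes: parametrize all $Z^{(2)}$ with $Z^{(2)}_{j,\,k}|_{\sF_k}=\d_{j,\,k-1}\o_{k,\,2}$ for which the pair $\begin{bmatrix}Z^{(1)}\\ Z^{(2)}\end{bmatrix}=\begin{bmatrix}H-HZ^{(2)}\\ Z^{(2)}\end{bmatrix}$ satisfies \eqref{SchurClass}. Note that \eqref{SchurClass} is genuine operator contractivity of $\begin{bmatrix}Z^{(1)}\\ Z^{(2)}\end{bmatrix}\colon\asU\to\asY\oplus\asU$, so $Z^{(1)}$ and $Z^{(2)}$ are automatically bounded and membership in $\eUM^\iy$ is a consequence of contractivity rather than a separate requirement.

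The heart of the argument is a defect-space factorization. For each $k$, $H_k$ is a contraction with defect operator $D_{H_k}$ and defect space $\sD_{H_k}$, and I would apply the elementary completion lemma for contractions---that $\begin{bmatrix}A\\ B\end{bmatrix}$ is a contraction if and only if $B=\Gamma D_A$ for a unique contraction $\Gamma$ on $\sD_A$---organized along the shift so that the residual contraction attached to the $k$-th column takes values in the \emph{lower} defect space $\sD_{H_{k-1}}$. Concretely, introducing the Julia operators (elementary rotations) $\begin{bmatrix}H_k & D_{H_k^*}\\ D_{H_k} & -H_k^*\end{bmatrix}$ of the columns and substituting $Z^{(1)}=H-HZ^{(2)}$ into \eqref{SchurClass}, the slack in the contraction condition is measured by the column defects $D_{H_k}$, and the admissible $Z^{(2)}$ are exactly those obtained from a strictly upper triangular contraction $C=\left[C_{j,\,k}\right]\in\eUM^\iy_\tu{ball,0}(\asD_H)$ through an explicit linear-fractional (Redheffer-type) transform whose coefficients are built from these Julia operators. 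Under this transform the interpolation requirement $Z^{(2)}_{j,\,k}|_{\sF_k}=\d_{j,\,k-1}\o_{k,\,2}$ becomes, via the defining identity \eqref{defomHk} for $\o_{H_k}$, namely $\o_{H_k}D_{H_k}|_{\sF_k}=D_{H_{k-1}}\o_{k,\,2}$, exactly the condition \eqref{KH} that $C_{k-1,\,k}|_{\sF_{H_k}}=\o_{H_k}$ and $C_{j,\,k}|_{\sF_{H_k}}=0$ for $j\neq k-1$. This produces the map from representing pairs to $\eC_{H,\,\o}$.

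To go back, given $C\in\eC_{H,\,\o}$ I would define $Z^{(2)}$ and then $Z^{(1)}:=H-HZ^{(2)}$ by the same linear-fractional formula and verify the three required properties: the interpolation \eqref{condZ12}, which by the previous paragraph is equivalent to \eqref{KH} and hence holds; the contractivity \eqref{SchurClass}; and the recovery $H=Z^{(1)}(I_\asU-Z^{(2)})^{-1}$, which is immediate from the defining relation $Z^{(1)}=H(I_\asU-Z^{(2)})$ once $I_\asU-Z^{(2)}$ is inverted in $\eUM(\asU)$ (its diagonal is $I$). That the two assignments are mutually inverse is then a direct computation with the Redheffer transform and its inverse. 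Existence of at least one representing pair follows because $\eC_{H,\,\o}\neq\varnothing$: the central parameter $C_{j,\,k}=\d_{j,\,k-1}\o_{H_k}\Pi_{\sF_{H_k}}$ lies in $\eUM^\iy_\tu{ball,0}(\asD_H)$ and satisfies \eqref{KH}, since its columns occupy distinct rows and hence $\|Cx\|^2=\sum_k\|\o_{H_k}\Pi_{\sF_{H_k}}x_k\|^2\le\|x\|^2$; the explicit pair \eqref{specchoice} provides a concrete representing pair as well.

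The main obstacle I anticipate is global control of operator norms. Because the index set is all of $\BZ$, there is no initial column from which to run an induction, so both the contractivity \eqref{SchurClass} of the reconstructed pair and the contractivity of the extracted parameter $C$ on $\asD_H$ must be established simultaneously over the whole doubly infinite array. I expect this to be handled by a lurking-isometry argument---assembling the column Julia operators into a single isometry, or unitary coupling, on a suitable direct sum and reading the contraction estimate off from it---or, alternatively, by truncating to the windows $[-n,n]$, applying the finite version of the completion lemma, and passing to a weak operator limit using the uniform bound $\|\cdot\|\le1$. A secondary delicacy is that $H$ itself need only belong to $\eUM^2_\tu{ball}(\asU,\asY)$, so the products $HZ^{(2)}$ and $Z^{(1)}(I_\asU-Z^{(2)})^{-1}$ must be treated as matrix products in $\eUM$ that land in bounded classes precisely when \eqref{SchurClass} holds; tracking this is what makes the boundedness bookkeeping the technically demanding part of the proof.
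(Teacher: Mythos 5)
Your reduction steps are sound and match the paper's underlying mechanics: the pair is indeed determined by $Z^{(2)}$ (equivalently by $F=(I_\asU-Z^{(2)})^{-1}$), the interpolation conditions on $Z^{(1)}=H-HZ^{(2)}$ do follow automatically from \eqref{intpolcond1} and the conditions on $Z^{(2)}$, and your central parameter $C_{j,\,k}=\d_{j,\,k-1}\o_{H_k}\Pi_{\sF_{H_k}}$ is correctly shown to lie in $\eC_{H,\,\o}$. But the heart of the theorem is precisely the step you leave as an assertion: that the admissible $Z^{(2)}$ are \emph{exactly} the images of $\eUM^\infty_\tu{ball,0}(\asD_H)$ under an explicit transform, bijectively. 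You never write the transform down, so neither surjectivity nor the translation of the interpolation condition can be checked. In the paper this is done in three concrete stages: Theorem \ref{th:UM2} converts contractivity of the stacked column into the positive-real majorization $F+F^*\geq H^*H+I_\asU$ with $F_{j,\,j}=I_{\sU_j}$ (via finite sections $\de_{j,\,k}$, which is your truncation idea made precise); Theorem \ref{th:harmaj} then parametrizes \emph{all} such majorants by applying, entry by entry, the $2\times 2$ non-negativity lemma ($B=A_2^{1/2}\Phi A_1^{1/2}$ with $\Phi$ a \emph{unique} contraction) to factor $W-V$ as $\Pi_H^*\nabla_H K\nabla_H\Pi_H$ with $K$ positive real and unit diagonal, followed by the inverse Cayley transform $C=(K-I)(K+I)^{-1}$. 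That entrywise factorization through the defects is the surjectivity-and-uniqueness mechanism your Julia-operator/lurking-isometry sketch gestures at but does not supply. Moreover, your claim that the interpolation requirement on $Z^{(2)}$ "becomes exactly" \eqref{KH} under the transform is not a formal conjugation: in the paper's Proposition \ref{pr:paramFC} one must first prove the identity $\tilde{N}E=N\om^{(2)}$ \eqref{NtildeE}, which uses \eqref{intpolcond1} satisfied by $H$ -- i.e.\ the affine ("central") part $N$ of the formula \eqref{defFC} itself absorbs part of the interpolation condition, and only after subtracting it does the residual condition reduce, via \eqref{defomHk} and injectivity of $\Pi_H^*\nabla_H(I-C)^{-1}$ on $\asD_H^+$, to \eqref{KH}. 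This computation has no counterpart in your outline.

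There is also a concrete error in your existence argument. Non-emptiness of $\eC_{H,\,\o}$ yields existence of a representing pair only \emph{after} the bijection is proved, so it cannot serve as an independent fallback; and the pair \eqref{specchoice} does not help, because it represents the particular central solution $\tilde{H}$, not the arbitrarily given solution $H$ of the theorem. For a general $H$ you must actually run the construction: take the central $C$, form $F=N+\Pi_H^*\nabla_H(I_{\asD_H}-C)^{-1}\nabla_H\Pi_H$ as in \eqref{defFC}, and verify via Theorem \ref{th:UM2} and Proposition \ref{pr:UM2intpol} that $Z^{(1)}=HF^{-1}$, $Z^{(2)}=I_\asU-F^{-1}$ satisfy \eqref{Z12}, \eqref{SchurClass}, \eqref{condZ12} and reproduce $H$ through \eqref{HZ12}. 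In short: your skeleton is the right one and several reductions are correct, but the parametrization theorem that constitutes the actual proof is missing, and the shortcut offered for existence fails.
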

\noindent The full version of the above theorem (see Theorem
\ref{thmmain2full} below) will also present necessary and sufficient
conditions  guaranteeing that the set $\eC_{H,\, \o}$ consists of a
single element only.

\smallskip
Let $H$ be a solution to \textup{Problem} \ref{mainprobl}. In the
analysis of the set of all pairs $(Z^{(1)}, Z^{(2)})$ satisfying
\eqref{Z12}--\eqref{HZ12}  the following problem enters in a natural
way.

\begin{probl}
\label{probl2} Given $H\in \eUM^2_\tu{ball}(\asU,\asY)$, describe
the set of operator matrices $F$ in $\eUM(\asU)$ satisfying
\begin{equation}\label{major}
F+F^*\geq H^*H+I_\asU \ands F_{j,\,j}=I_{\sU_j}\quad (j\in\BZ).
\end{equation}
\end{probl}

Note that the matrix product $H^*H$ is well-defined because each
column of $H$ induces a contractive operator (see Subsection
\ref{ssecHstH} for further details). The inequality sign in
\eqref{major} means that the operator matrix
$\half(F+F^*)-H^*H-I_\asU$ is non-negative (see Subsection
\ref{ssecherm} for the definition of this notion and further
details). The fact that \eqref{major} appears in the analysis,
follows from the observation that $F=(I_\asU-Z^{(2)})^{-1}$
satisfies \eqref{major} whenever the pair $(Z^{(1)}, Z^{(2)})$
satisfies the conditions \eqref{Z12}--\eqref{HZ12}. This connection
will be made more precise in Theorem \ref{th:UM2}. The solution to
Problem \ref{probl2} will be obtained as a corollary to Theorem
\ref{th:harmaj}.

For the time-invariant case Theorems \ref{thmmain1} and Theorem
\ref{thmmain2} can be found in \cite{FtHK07b}. By using the
reduction techniques developed in Chapter X of \cite{FFGK98} (also
\cite{FFGK96}) Problem \ref{mainprobl} can be transformed into a
problem of the type considered in \cite{FtHK07b}. This
transformation together with techniques from \cite{FFGK98} can be
used to present an alternative way to prove our main results. We
shall not develop this approach in the present paper. Theorems
\ref{thmmain1} and Theorem \ref{thmmain2} can be used to solve a
time-variant analogue of the relaxed commutant lifting problem. We
will describe this connection in the final section of the paper.

This paper consists of five sections not counting this introduction.
The first section has a preliminary character. We introduce some
additional notation and recall a  number of elementary facts about
operator matrices that will be used in the proofs. In Section
\ref{sec:balUM2} we outline a general approach to deal with Problem
\ref{mainprobl} and prove Theorem \ref{thmmain1}. Section
\ref{sec:problem2} is divided into three subsections. In this
section a time-variant analogue of the Cayley transform is used to
relate operator matrices from $\eUM^\infty_\tu{ball,0}$ to positive
real operator matrices from $\eUM(\asU)$. We apply this result to
solve Problem \ref{probl2} and to parameterize the set of all its
solutions. Theorem \ref{thmmain2} is proved in Section
\ref{sec:proof3}; this section also presents the full version of
Theorem \ref{thmmain2} and its proof. Here we also discuss the
problem of finding necessary and sufficient conditions for the
existence of a unique solution to Problem \ref{mainprobl}. In the
final section an example involving finite operator matrices will be
presented and we discuss the connection with a time-variant analogue
of the relaxed commutant lifting problem.

\section{Preliminaries}\label{sec:prelim}\setcounter{equation}{0}
In this section we bring together a  number of elementary facts
about operator matrices that will be used in the sequel. In what
follows  we assume the reader to be familiar with the notations
introduced in the previous section.

\subsection{The set $\eM({\asU},{\asY})$ and bounded operators}
\label{ssecM} The set $\eM({\asU},{\asY})$ is a linear space with
respect to the usual operation of matrix addition. Given
$M=\left[M_{j,\,k}\right]_{{j,\,k}\in\BZ}$ in $\eM({\asU},{\asY})$
and $j\leq k$ we write $\Delta_{j,\,k}(M)$ for
 the $\{j,k\}$-\emph{finite section} of $M$, that is,
\begin{equation}\label{sectionM2}
\Delta_{j,\,k}(M)= \begin{bmatrix}
M_{j,\,j}&\cdots&M_{j,\,k}\\
\vdots&&\vdots\\
M_{k,\,j}&\cdots&M_{k,\,k}
\end{bmatrix}.
\end{equation}
Note that $\Delta_{j,\,k}(M)$ defines a bounded linear operator from
$\sU_j\oplus\cdots\oplus\sU_k$ into $\sY_j\oplus\cdots\oplus\sY_k$.

In general, an operator matrix $M\in\eM({\asU},{\asY})$ does not
induce in a canonical way  a bounded operator from
$\asU=\oplus_{k\in \BZ}\, \sU_k$ into the space $\asY=\oplus_{k\in
\BZ}\, \sY_k$. In order for this to happen it is necessary and
sufficient that
\begin{equation}
\label{normM} \sup_{j\leq k}\|\Delta_{j,\,k}(M)\|<\iy.
\end{equation}
Furthermore, if  \eqref{normM} is satisfied, then the quantity in
the left hand side of \eqref{normM} is equal to the  norm of $M
=\left[M_{j,\,k}\right]_{{j,\,k}\in\BZ}$ as an operator from $\asU$
into $\asY$.

\subsection{Invertibility in the algebra $\eUM({\asU})$} \label{ssecinvUM}
Let $\asX=\oplus_{k\in \BZ}\, \sX_k$, $\asU=\oplus_{k\in \BZ}\,
\sU_k$, and $\asY=\oplus_{k\in \BZ}\, \sY_k$ be Hilbert space direct
sums. If $B\in \eUM(\asX,\asU)$ and $A\in  \eUM(\asU,\asY)$, then
the (block) matrix product $AB$ is well-defined and
$AB\in\eUM(\asX,\asY)$. Moreover, for $C\in\eUM(\asX,\asY)$ we have
\begin{equation}\label{prodABC}
AB=C \quad\Longleftrightarrow\quad
\Delta_{j,\,k}(A)\Delta_{j,\,k}(B)= \Delta_{j,\,k}(C)\ ( j\leq k).
\end{equation}
In particular, the set $\eUM({\asU})$ is closed under the usual
multiplication of matrices. In fact, from \eqref{prodABC} we see
that $\eUM({\asU})$ is an algebra with the identity matrix
$I_{\asU}$ as a unit. {}From \eqref{prodABC} it also follows that
the operator matrix $M=\left[M_{i,\,j}\right]_{i,\,j\in\BZ}\in
\eUM({\asU})$ is invertible in $\eUM({\asU})$ if and only if for
each $j\in\BZ$ the $j$-th diagonal entry $M_{j,\,j}$ is an
invertible operator on $\sU_j$. In that case, we have
\begin{equation}\label{FinInv}
\de_{j,\,k}(M^{-1})=\de_{j,\,k}(M)^{-1}\quad (j,k\in\BZ, j\leq k).
\end{equation}
In particular, the $(j,j)$-th entry of $M^{-1}$ is equal to
$M_{k,\,k}^{-1}$.

The above mentioned properties of $\eUM({\asU})$ also follow from
the fact that  an operator matrix from $\eUM({\asU})$ can be
identified in the usual way with a linear transformation on the
linear space $\asU^+$. By definition, the space $\asU^+$ consists of
all double infinite one column matrices $\mathbf{u}=[u_j\,]_{j\in
\BZ}$, with $u_j\in\sU_j$ for each $j\in \BZ$, such that $u_\nu=0$
for $\nu
> \ell$, for some $\ell$ depending on $\mathbf{u}$.

\subsection{Hermitian and non-negative operator matrices.}
\label{ssecherm} An operator matrix $M\in \eM({\asU})$,
$M=\left[M_{j,\,k}\right]_{{j,\,k}\in\BZ}$, is said to be
\emph{hermitian} if $M^*=M$, where $M^*$ is the operator matrix
$M^*=\left[(M_{k,\,j})^*\right]_{{j,\,k}\in\BZ}$. The \emph{real
part} of $M\in \eM({\asU})$ is the operator matrix $\re M$ given by
\begin{equation}
\label{realpart} \re M=\half(M+M^*).
\end{equation}
Obviously, $\re M$ is hermitian. We call $M\in \eM({\asU})$
\emph{non-negative} if for each ${j,\,k}\in\BZ$, $j\leq k$, the
finite section $\Delta_{j,\,k}(M)$ induces a non-negative operator
on the Hilbert space direct sum $\sU_j\oplus\cdots\oplus\sU_k$. In
that case $M$ is hermitian. For operator matrices $M$ and $N$ in
$\eM({\asU})$ we say that $M$ is {\em greater than or equal to} $N$,
and write $M\geq N$, if the operator matrix $M-N$ is non-negative.
Hence $M\geq 0$ means that $M$ is non-negative. Finally, an operator
matrix $M\in \eUM(\asU)$ is said to be \emph{positive real} whenever
$\re M$ is non-negative. Positive real operator matrices $M\in
\eUM(\asU)$ that induce bounded operators on $\asU=\oplus_{k\in
\BZ}\, \sU_k$ (i.e., $M\in \eUM^\iy(\asU)$) have been extensively
studied in \cite{ADP99, ADP01}.

\subsection{The operator matrix $H^*H$.}\label{ssecHstH} Let $H\in
\eUM^2_\tu{ball}(\asU,\asY)$. Recall that for each $k\in \BZ$ the
$k$-th column $H_k$ of $H$ induces a contractive operator, also
denoted by $H_k$, from $\sU_k$ into $\asY=\oplus_{k\in \BZ}\,
\sY_k$. It follows that for each $j$ and $k$ in $\BZ$ the product
$(H_j)^*H_k$ is a well-defined contraction from $\sU_k$ into
$\sY_j$. We define $H^*H$ to be the operator matrix in $\eM(\asU)$
given by
\begin{equation}\label{HstH}
H^*H=\mat{c}{(H_j)^*H_k}_{j,\,k\in\BZ}.
\end{equation}
Clearly, $H^*H\in \eM(\asU)$ is hermitian. In fact, since for each
${j,\,k}\in\BZ$, $j\leq k$,
\[
\Delta_{j,\,k}(H^*H)=\begin{bmatrix} (H_j)^*\\\vdots\\(H_k)^*
\end{bmatrix}\begin{bmatrix} H_j& \cdots& H_k
\end{bmatrix},
\]
the operator matrix $H^*H$ is non-negative. Note that $H^*H$ is the
real part of the operator matrix $V\in\eUM(\asU)$ be given by
\begin{equation}
\label{defV} V=\mat{c}{V_{j,k}}_{j,k\in\BZ},\quad
V_{j,\,k}=\left\{\begin{array}{cl}
2(H_j)^*H_k, & \mbox{for $j<k$}, \\
\noalign{\vskip6pt}
(H_j)^*H_j, & \mbox{for $j=k$},\\
\noalign{\vskip6pt} 0, &\mbox{for $j>k$}.
\end{array}\right.
\end{equation}

\section{The set $\eUM^2_\tu{ball}(\asU,\asY)$ and the
proof of Theorem
\ref{thmmain1}}\label{sec:balUM2}\setcounter{equation}{0} The main
result  of this section (Theorem \ref{th:UM2} below) shows how
elements in $\eUM^2_\tu{ball}(\asU,\asY)$ can be constructed from a
pair of operators $Z^{(1)}\in\eUM^\infty(\asU,\asY)$ and
$Z^{(2)}\in\eUM_0^\infty(\asU)$ satisfying an additional norm
constraint. This result together with Proposition \ref{pr:UM2intpol}
allows us to prove Theorem \ref{thmmain1}. Theorem \ref{th:UM2} also
shows the relevance of Problem \ref{probl2} in the analysis of
Problem \ref{mainprobl}.

\begin{thm}\label{th:UM2}
Assume we have given operator matrices
\begin{equation}
\label{Z12b} Z^{(1)}\in\eUM^\infty(\asU,\asY), \quad
Z^{(2)}\in\eUM_0^\infty(\asU)
\end{equation}
satisfying the norm constraint
\begin{equation}\label{Schur2}
\|Z^{(1)}u\|^2+\|Z^{(2)}u\|^2\leq \|u\|^2\quad (u\in{\asU}).
\end{equation}
Then
\begin{equation}\label{ZtoHandF}
H=Z^{(1)} (I_\asU- Z^{(2)} )^{-1}\ands F=(I_\asU-Z^{(2)})^{-1}
\end{equation}
are well-defined operator matrices,
\begin{equation} \label{HFb} H\in\eUM^2_\tu{ball}(\asU,\asY),
\quad F\in\eUM(\asU)
\end{equation} and
\begin{equation}\label{HFconditions}
F+F^*\geq H^*H+I_\asU, \quad  F_{j,\,j}=I_{\sU_j}\quad  (j\in \BZ).
\end{equation}
Conversely, if we have given $H$ and $F$ as in \eqref{HFb}  such
that \eqref{HFconditions} holds, then $F$ is an invertible element
in $\eUM(\asU)$, the operator matrices
\begin{equation}\label{Z1Z2}
Z^{(1)}= HF^{-1} \ands Z^{(2)}= I_{{\asU}}- F^{-1}
\end{equation}
are well-defined and these operator matrices satisfy \eqref{Z12b}
and \eqref{Schur2}. Moreover, the map $(Z^{(1)},Z^{(2)})\mapsto
(H,F)$ defined by \eqref{ZtoHandF} is a one-to-one map from the set
of all pairs $(Z^{(1)},Z^{(2)})$ satisfying \eqref{Z12b} and
\eqref{Schur2} onto the set of all pairs $(H,F)$ satisfying
\eqref{HFb} and  \eqref{HFconditions}. The inverse of this map is
given by the map $(H,F)\mapsto (Z^{(1)},Z^{(2)})$ defined by
\eqref{Z1Z2}.
\end{thm}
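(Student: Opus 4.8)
The plan is to prove the two directions separately and then check that the indicated maps are mutually inverse. The whole argument rests on the fact, recorded in \eqref{prodABC} and \eqref{FinInv}, that for upper triangular operator matrices the passage $M\mapsto \Delta_{j,\,k}(M)$ to finite sections is a unital algebra homomorphism that commutes with inversion. This lets me transfer the (reversible) algebraic relation between $(Z^{(1)},Z^{(2)})$ and $(H,F)$ down to genuine Hilbert space operators on the finite blocks $\sU_j\oplus\cdots\oplus\sU_k$, where every manipulation is legitimate, and then lift the resulting operator inequalities back up by a limiting argument in the lower index.

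For the forward direction I would first note that $Z^{(2)}\in\eUM_0^\infty(\asU)$ is strictly upper triangular, so $I_\asU-Z^{(2)}$ has all diagonal entries $I_{\sU_j}$ and is invertible in $\eUM(\asU)$ by Subsection \ref{ssecinvUM}; hence $F$ and $H=Z^{(1)}F$ are well-defined in $\eUM(\asU)$ and $\eUM(\asU,\asY)$, and \eqref{FinInv} gives $F_{j,\,j}=I_{\sU_j}$. Next, the constraint \eqref{Schur2} descends to every finite section: restricting a vector supported on $[j,k]$ can only decrease the norm of $Z^{(i)}u$, so $\Delta_{j,\,k}(Z^{(1)})$ and $\Delta_{j,\,k}(Z^{(2)})$ jointly satisfy the same contractive inequality on $\sU_j\oplus\cdots\oplus\sU_k$. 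Writing $\tilde G=I-\Delta_{j,\,k}(Z^{(2)})=\Delta_{j,\,k}(F)^{-1}$ and substituting $\Delta_{j,\,k}(Z^{(1)})=\Delta_{j,\,k}(H)\tilde G$, $\Delta_{j,\,k}(Z^{(2)})=I-\tilde G$ into the finite Schur inequality, then conjugating by $\tilde G^{-1}=\Delta_{j,\,k}(F)$, yields
\[
\Delta_{j,\,k}(H)^*\Delta_{j,\,k}(H)+I\leq \Delta_{j,\,k}(F)+\Delta_{j,\,k}(F)^*.
\]
Compressing this to the single coordinate $k$ and using $F_{k,\,k}=I_{\sU_k}$ gives $\sum_{i=j}^k\|H_{i,\,k}u_k\|^2\leq\|u_k\|^2$; letting $j\to-\iy$ yields \eqref{condH} with $c_H=1$, so $H\in\eUM^2_\tu{ball}(\asU,\asY)$ and $H^*H$ is defined. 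Compressing instead to a general block $[p,q]$ and letting $j\to-\iy$ (the truncated blocks $\sum_{i=j}^{\min(l,m)}(H_{i,\,l})^*H_{i,\,m}$ increasing to $(H_l)^*H_m$) upgrades the display to $\Delta_{p,\,q}(H^*H)+I\leq\Delta_{p,\,q}(F)+\Delta_{p,\,q}(F)^*$ for all $p\leq q$, which is exactly \eqref{HFconditions}.

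For the converse I would run the same chain backwards. Since $F_{j,\,j}=I_{\sU_j}$, $F$ is invertible in $\eUM(\asU)$, so $Z^{(2)}=I_\asU-F^{-1}$ has zero diagonal (hence lies in $\eUM_0(\asU)$) and $Z^{(1)}=HF^{-1}\in\eUM(\asU,\asY)$. Taking finite sections of the hypothesis $F+F^*\geq H^*H+I_\asU$ and using that the tail contribution $\Delta_{j,\,k}(H^*H)-\Delta_{j,\,k}(H)^*\Delta_{j,\,k}(H)=\bigl[\sum_{i<j}(H_{i,\,l})^*H_{i,\,m}\bigr]_{l,m}$ is a Gram matrix, hence non-negative, gives $\Delta_{j,\,k}(F)+\Delta_{j,\,k}(F)^*\geq\Delta_{j,\,k}(H)^*\Delta_{j,\,k}(H)+I$. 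Conjugating by $\Delta_{j,\,k}(F^{-1})$ and rearranging reverses the earlier algebra to the finite Schur inequality; in particular each $\Delta_{j,\,k}(Z^{(i)})$ has norm at most one, so by \eqref{normM} the matrices $Z^{(1)}\in\eUM^\iy(\asU,\asY)$ and $Z^{(2)}\in\eUM_0^\iy(\asU)$ induce bounded operators. Finally, for finitely supported $u$ the quantities $\|\Delta_{j,\,k}(Z^{(i)})\tilde u\|^2$ increase monotonically to $\|Z^{(i)}u\|^2$ as $j\to-\iy$, so the finite constraint passes to \eqref{Schur2} on a dense subspace and extends by continuity. Bijectivity is then immediate, since the substitutions in \eqref{ZtoHandF} and \eqref{Z1Z2} are algebraically inverse to one another.

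The two algebraic manipulations are routine and genuinely identical up to the direction of the inequality. The main obstacle is the interface between finite sections and the full matrices: one must keep in mind that $\Delta_{j,\,k}(H^*H)$ is built from whole columns while $\Delta_{j,\,k}(H)^*\Delta_{j,\,k}(H)$ is built from truncated ones, and that their difference is controlled \emph{in the right direction} — non-negative, by the Gram-matrix observation — so the finite inequalities can be pushed to the limit $j\to-\iy$ consistently in both directions. Establishing the column contractivity of $H$ (needed even to make sense of $H^*H$) before invoking \eqref{HFconditions}, and justifying the monotone and continuity limits that recover the genuinely infinite norm constraint \eqref{Schur2}, are where the real care is required.
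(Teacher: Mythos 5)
Your proof is correct and takes essentially the same route as the paper: both reduce everything to finite sections via \eqref{prodABC} and \eqref{FinInv}, use the same conjugation identity to pass between the joint contractivity of $\bigl(\Delta_{j,\,k}(Z^{(1)}),\Delta_{j,\,k}(Z^{(2)})\bigr)$ and the inequality $\Delta_{j,\,k}(F)+\Delta_{j,\,k}(F)^*\geq \Delta_{j,\,k}(H)^*\Delta_{j,\,k}(H)+I$, invoke the same Gram-matrix comparison $\Delta_{j,\,k}(H^*H)\geq \Delta_{j,\,k}(H)^*\Delta_{j,\,k}(H)$, and then pass to limits. The only cosmetic differences are that you obtain the column contractivity of $H$ by compressing your finite-section inequality to a single coordinate instead of the paper's direct vector computation with $v=\Delta_{j,\,k}(F)u$, and you upgrade to $F+F^*\geq H^*H+I_\asU$ by a monotone limit in the lower index where the paper reduces to the genuinely finite case and lets the projections $Q_N$ onto $\asY_N$ converge strongly.
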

\bpr We split the proof into four parts. In the first two parts
$Z^{(1)}$ and $Z^{(2)}$ are given and satisfy \eqref{Z12b} and
\eqref{Schur2}, and we prove that $H$ and $F$ in \eqref{ZtoHandF}
are well-defined and satisfy \eqref{HFb} and \eqref{HFconditions}.
In the third part we prove the reverse statement. In the final part
we show that the maps $(Z^{(1)},Z^{(2)})\mapsto (H,F)$ and
$(H,F)\mapsto(Z^{(1)},Z^{(2)})$ in Theorem \ref{th:UM2} are each
others inverses.

\smallskip\noindent\textit{Part 1.} Assume that $Z^{(1)}\in\eUM^\infty(\asU,\asY)$ and
$Z^{(2)}\in\eUM_0^\infty(\asU)$ satisfy (\ref{Schur2}), and let $H$
be given by the first part of (\ref{ZtoHandF}). Our aim is to prove
that $H\in\eUM^2_\tu{ball}(\asU,\asY)$.

Fix $k,j\in\BZ, j<k$, and $u_k\in\sU_k$. Define
\begin{equation*}
v=\mat{c}{v_j\\\vdots\\v_k}=\de_{j,\,k}\left((I_\asU-Z^{(2)})^{-1}\right)u,\quad
\text{where }u=\mat{c}{0\\\vdots\\0\\u_k}\in\oplus_{n=j}^k\sU_n.
\end{equation*}
Note that $v\in\oplus_{n=j}^k\sU_n$. Since $Z^{(2)}$ is strictly
upper triangular, we have $v_k=u_k$. Observe that
\begin{eqnarray*}
\de_{j,\,k}(Z^{(2)})\de_{j,\,k}\left((I_\asU-Z^{(2)})^{-1}\right)
&=&\de_{j,\,k}\left(Z^{(2)}(I_\asU-Z^{(2)})^{-1}\right)\\
&=&\de_{j,\,k}\left((I_\asU-Z^{(2)})^{-1}-I_\asU\right)\\
&=&\de_{j,\,k}\left((I_\asU-Z^{(2)})^{-1}\right)-I_{\oplus_{n=j}^k\sU_n}
\end{eqnarray*}
and
\[
\de_{j,\,k}(Z^{(1)})\de_{j,\,k}\left((I_\asU-Z^{(2)})^{-1}\right)=
\de_{j,\,k}\left(Z^{(1)}(I_\asU-Z^{(2)})^{-1}\right)=
\de_{j,\,k}(H).
\]
Hence
\[
\de_{j,\,k}(Z^{(2)})v=v-u=:\a\ands
\de_{j,\,k}(Z^{(1)})v=\de_{j,\,k}(H)u=:\b,
\]
where, using $v_k=u_k$,
\[
\a=\mat{c}{v_j\\\vdots\\v_{k-1}\\0}\ands\b=\mat{c}{H_{j,\,k}u_k\\\vdots\\H_{k-1,\,k}u_k\\H_{k,\,k}u_k}.
\]
The assumption (\ref{Schur2}) implies that
\[
\mat{c}{\de_{j,\,k}(Z^{(1)})\\\de_{j,\,k}(Z^{(2)})}
\]
is a contraction. Thus
\begin{equation*}
\sum_{\nu=j}^{k-1} \|v_\nu\|^2+\sum_{\nu=j}^{k}
\|H_{\nu,\,k}u_k\|^2=\|\a\|^2+\|\b\|^2\leq \|v\|^2=\sum_{\nu=j}^{k}
\|v_\nu\|^2.
\end{equation*}
By comparing the first term in the left hand side of the above
inequality with the term in the right side and using $v_k=u_k$, we
see that $\sum_{\nu=j}^{k} \|H_{\nu,\,k}u_k\|^2$ is less than or
equal to $ \|u_k\|^2$. This holds for each $j\leq k$ and
$u_k\in\sU_k$, and therefore the operator defined by the $k$-th
column of $H$ is  a contraction. Since $k\in \BZ$ is arbitrary,
$H\in\eUM^2_\tu{ball}(\asU,\asY)$.

\smallskip\noindent\textit{Part 2.} Under the same assumptions as in Part 1,
let $H$ and $F$ be given by (\ref{ZtoHandF}). Our aim is to prove
that (\ref{HFconditions}) holds. {}From the definition of $F$ the
right hand side of (\ref{HFconditions}) is clear. Thus we have to
prove the inequality in the left hand side of (\ref{HFconditions}).

First assume that there exists an $N\geq0$ such that
$\sU_k=\sY_k=\{0\}$ for $|k|>N$. In that case $\eM(\asU)=\sL(\asU)$,
$\eUM(\asU)=\eUM^\infty(\asU)$ and
$\eUM(\asU,\asY)=\eUM^\infty(\asU,\asY)$. In particular, $H$ and $F$
are bounded operators. We have
\begin{eqnarray*}
I-Z^{(2)*}Z^{(2)}
&=&I-(I-I+Z^{(2)*})(I-I+Z^{(2)})\\
&=&I-(I-F^{-*})(I-F^{-1})\\
&=&F^{-*}+F^{-1}-F^{-*}F^{-1}.
\end{eqnarray*}
Thus
\begin{eqnarray*}
H^*H&=&F^*Z^{(1)*}Z^{(1)}F \leq F^*(I-Z^{(2)*}Z^{(2)})F\\
&=&F^*(F^{-*}+F^{-1}-F^{-*}F^{-1})F =F+F^*-I.
\end{eqnarray*}
So (\ref{HFconditions}) holds in this case.

Now fix $j\leq k$. For $N\geq |j|,|k|$ set
\[
\asU_N=\oplus_{i=-N}^N\sU_i,\quad \asY_N=\oplus_{i=-N}^N\sY_i,
\]
and notice that
\begin{eqnarray*}
&&\de_{-N,\,N}(F)=(I_{\asU_N}-\de_{-N,\,N}(Z^{(2)}))^{-1}\quad\text{and}\\
&&\de_{-N,\,N}(H)=\de_{-N,\,N}(Z^{(1)})(I_{\asU_N}-\de_{-N,\,N}(Z^{(2)}))^{-1}.
\end{eqnarray*}
Next apply  the result of the second paragraph of this part to
\[
\de_{-N,\,N}(Z^{(1)})\in\eUM(\asU_N,\asY_N)\quad\text{and}\quad
\de_{-N,\,N}(Z^{(2)})\in\eUM_0(\asU_N).
\]
Using that $\de_{j,\,k}(\de_{-N,\,N}(M))=\de_{j,\,k}(M)$ for each
$M\in\eUM(\asU)$, it follows that
\begin{equation}\label{HFconditionsFin}
\de_{j,\,k}(F)+\de_{j,\,k}(F^*)\geq \de_{j,\,k}(H^*Q_NH)+I,
\end{equation}
where $Q_N\in\sL(\asY)$ is the orthogonal projection on $\asY_N$.
Since
\[
\de_{j,\,k}(H^*Q_NH)=\mat{c}{(H_j)^*\\\vdots\\(H_k)^*}Q_n\mat{ccc}{H_j&\cdots&H_k},
\]
$\mat{ccc}{H_j&\cdots&H_k}$ is a bounded linear operator, $Q_N\to
I_{\asY}$ as $N\to\infty$ with convergence in the strong operator
topology, and (\ref{HFconditionsFin}) holds for each $N\geq|j|,|k|$,
it follows that
\begin{equation}
\label{sectineq} \de_{j,\,k}(F)+\de_{j,\,k}(F^*)\geq
\de_{j,\,k}(H^*H)+I.
\end{equation}
Thus (\ref{HFconditions}) holds.

\smallskip\noindent\textit{Part 3.} In this part we assume that
$H\in \eUM^2_{\textup{ball}}({\asU},{\asY})$,  $F\in \eUM(\asU)$,
and that condition (\ref{HFconditions}) is fulfilled. We show that
the operator matrices $Z^{(1)}$ and $Z^{(2)}$ defined by
(\ref{Z1Z2}) are in $\eUM^\infty(\asU,\asY)$ and
$\eUM_0^\infty(\asU)$, respectively, and that (\ref{Schur2}) is
satisfied.

The second part of (\ref{HFconditions}) implies that $F$ is
invertible in $\eUM(\asU)$. Thus the operator matrices $Z^{(1)}$ and
$Z^{(2)}$ in (\ref{Z1Z2}) are well defined. Moreover, for each $j\in
\BZ$ the $j$-th  diagonal entry of $F^{-1}$ is the identity operator
$I_{\sU_j}$, and thus the matrix $Z^{(2)}$ is strictly upper
triangular. It remains to show that $Z^{(1)}$ and $Z^{(2)}$ satisfy
(\ref{Schur2}), since this automatically implies that
$Z^{(1)}\in\eUM^\infty(\asU,\asY)$ and
$Z^{(2)}\in\eUM_0^\infty(\asU)$.

First note that it suffices to show that
\begin{equation}\label{FinSchurClass}
\de_{j,\,k}(Z^{(1)})^*\de_{j,\,k}(Z^{(1)})+\de_{j,\,k}(Z^{(2)})^*\de_{j,\,k}(Z^{(2)})\leq
\de_{j,\,k}(I_\asU) \quad (j\leq k).
\end{equation}
Indeed, if the above inequalities have been established, then we can
use the results reviewed in the second part of Subsection
\ref{ssecM} to derive (\ref{Schur2}). Fix $j\leq k$, and write $\de$
in place of $\de_{j,\,k}$. {}From $Z^{(2)}= I_\asU-F^{-1}$, we see
that $\de(Z^{(2)})= \de(I_\asU)- \de(F^{-1})$. Now use formula
(\ref{FinInv}) to see that $\de(F^{-1})=\de(F)^{-1}$. But then
\begin{eqnarray*}
&&\de(Z^{(2)})^*\de(Z^{(2)})= \\
\noalign{\vskip4pt}
&&\hspace{1cm}=\left(\de(I_\asU)-\de(F)^{-*}\right)
\left(\de(I_\asU)-\de(F)^{-1}\right)\\
\noalign{\vskip4pt}
&&\hspace{1cm}=\de(I_\asU)- \de(F)^{-1}-\de(F)^{-*}+\de(F)^{-*}\de(F)^{-1}\\
\noalign{\vskip4pt} &&\hspace{1cm}=\de(F)^{-*}
\big(\de(F)^*\de(F)-\de(F)^*- \de(F)+ \de(I_\asU) \big) \de(F)^{-1}.
\end{eqnarray*}
Thus
\begin{eqnarray*}
&&\de(F)^*\left(\de(I_\asU)-\de(Z^{(2)})^*\de(Z^{(2)})\right)\de(F)=\\
\noalign{\vskip4pt} &&\hspace{3cm}=\de(F)^*+\de(F)-\de(I_\asU)\geq
\de(H^*H).
\end{eqnarray*}
Here we used that the first part of (\ref{HFconditions}) implies
(\ref{sectineq}).

Next we consider the equality $H=Z^{(1)}F$. Again using
(\ref{prodABC}), with $\de=\de_{j,\,k}$, we have
$\de(H)=\de(Z^{(1)})\de(F)$, and thus
\begin{equation*}
\de(F)^*\de(Z^{(1)})^*\de(Z^{(1)})\de(F)=\de(H)^*\de(H).
\end{equation*}
By combining this with the result of the previous paragraph we see
that
\begin{eqnarray*}
&&\de(F)^*\big(\de(I_\asU)- \de(Z^{(1)})^*\de(Z^{(1)})-
\de(Z^{(2)})^*\de(Z^{(2)})\big)\de(F)\geq\\
\noalign{\vskip6pt} &&\hspace{2cm}\geq
\de(H^*H)-\de(H)^*\de(H)\geq0.
\end{eqnarray*}
To see that the last inequality holds, let $P$ denote the projection
from ${\asY}$ onto $\sY_j\oplus\cdots\oplus\sY_{k}$ and observe that
\begin{eqnarray*}
\de(H^*H)&=&\begin{bmatrix}
(H_j)^*H_j&\cdots&(H_j)^*H_k\\
\vdots&&\vdots\\
(H_k)^*H_j&\cdots&(H_k)^*H_k
\end{bmatrix}= \begin{bmatrix}
(H_j)^*\\
\vdots\\
(H_k)^*
\end{bmatrix}  \begin{bmatrix}
H_j&\cdots&H_k
\end{bmatrix}\\
&\geq&
\begin{bmatrix}
(H_j)^*\\
\vdots\\
(H_k)^*
\end{bmatrix} P \begin{bmatrix}
H_j&\cdots&H_k
\end{bmatrix}
=\de(H)^*\de(H).
\end{eqnarray*}
Since $\de(F)$ is invertible, we obtain (\ref{FinSchurClass}).

\smallskip\noindent\textit{Part 4.}
In case $Z^{(1)}$ and $Z^{(2)}$ satisfy the assumptions of Parts 1
and 2 and $H$ and $F$ are given by (\ref{ZtoHandF}), it is clear
that
\[
HF^{-1}=Z^{(1)}\ands I-F^{-1}=Z^{(2)}.
\]
If $H$ and $F$ satisfy the assumptions of Parts 3 and $Z^{(1)}$ and
$Z^{(2)}$ are given by (\ref{Z1Z2}), then
\[
(I-Z^{(2)})^{-1}=(I_\asU-I_\asU+F^{-1})^{-1}=F\ands
Z^{(1)}(I_\asU-Z^{(2)})^{-1}=HF^{-1}F=H.
\]
Thus the maps $(Z^{(1)},Z^{(2)})\mapsto (H,F)$ and
$(H,F)\mapsto(Z^{(1)},Z^{(2)})$ in Theorem \ref{th:UM2} are each
others inverses. \epr

\medskip
The next proposition is an addition to Theorem \ref{th:UM2}. It
takes into account the interpolation condition on $H$ in
\eqref{intpolcond1} and on the pair $(Z^{(1)}, Z^{(2)})$ in
\eqref{condZ12}.

\begin{prop}\label{pr:UM2intpol}
Let $Z^{(1)}$ and $Z^{(2)}$ be as in \eqref{Z12b} and
\eqref{Schur2}, and define $H$ and $F$ by \eqref{ZtoHandF}. Then the
pair $(Z^{(1)},Z^{(2)})$ satisfies the interpolation conditions
\eqref{condZ12} if and only if $H$ satisfies the interpolation
condition \eqref{intpolcond1} and
\begin{equation}\label{intpolcondF}
F_{j,\,k}|_{\sF_k}=F_{j,\,k-1}\o_{k,2}\quad(j,k\in\BZ,j<k).
\end{equation}
\end{prop}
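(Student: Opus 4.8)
The plan is to argue entrywise. Since $H$, $F$, $Z^{(1)}$ and $Z^{(2)}$ are all upper triangular, the $(j,k)$-th entry of every product occurring in \eqref{ZtoHandF} and in its inverse \eqref{Z1Z2} is given by the finite band sum $\sum_{n=j}^{k}(\cdot)_{j,\,n}(\cdot)_{n,\,k}$; this is justified by \eqref{prodABC}, being the corner entry of the corresponding finite-section product. I will use repeatedly that $F^{-1}=I_\asU-Z^{(2)}$ has identity diagonal, so that $F_{j,\,j}=I_{\sU_j}$ for every $j$ by \eqref{FinInv}, and that $Z^{(2)}$ is strictly upper triangular. Two entry identities carry the whole argument: from $F(I_\asU-Z^{(2)})=I_\asU$ one reads off, for $j<k$,
\[
F_{j,\,k}=\sum_{n=j}^{k-1} F_{j,\,n}Z^{(2)}_{n,\,k},
\]
and from $H=Z^{(1)}F$ one reads off $H_{j,\,k}=\sum_{n=j}^{k}Z^{(1)}_{j,\,n}F_{n,\,k}$.

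For the forward implication I assume \eqref{condZ12}. Restricting the first identity above to $\sF_k$ and inserting $Z^{(2)}_{n,\,k}|_{\sF_k}=\d_{n,\,k-1}\o_{k,\,2}$ leaves only the term $n=k-1$, which gives \eqref{intpolcondF} immediately. For $H$ the diagonal case is clear, since $F_{k,\,k}=I_{\sU_k}$ yields $H_{k,\,k}=Z^{(1)}_{k,\,k}$ and hence $H_{k,\,k}|_{\sF_k}=\o_{k,\,1}$. For $j<k$ I restrict $H_{j,\,k}=\sum_{n=j}^{k}Z^{(1)}_{j,\,n}F_{n,\,k}$ to $\sF_k$; the top term $n=k$ drops out because $Z^{(1)}_{j,\,k}|_{\sF_k}=0$ while $F_{k,\,k}=I_{\sU_k}$, and in the remaining terms I replace $F_{n,\,k}|_{\sF_k}$ by $F_{n,\,k-1}\o_{k,\,2}$ using the \eqref{intpolcondF} just established. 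Recognizing $\sum_{n=j}^{k-1}Z^{(1)}_{j,\,n}F_{n,\,k-1}$ as $H_{j,\,k-1}$ then gives $H_{j,\,k}|_{\sF_k}=H_{j,\,k-1}\o_{k,\,2}$, which is the second part of \eqref{intpolcond1}.

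For the converse I assume \eqref{intpolcond1} and \eqref{intpolcondF} and work from $Z^{(2)}=I_\asU-F^{-1}$ and $Z^{(1)}=H(I_\asU-Z^{(2)})$. The $Z^{(2)}$-part of \eqref{condZ12} is the only delicate point. Restricting $F_{j,\,k}=\sum_{n=j}^{k-1}F_{j,\,n}Z^{(2)}_{n,\,k}$ to $\sF_k$ and substituting \eqref{intpolcondF} gives, for every $j<k$,
\[
F_{j,\,k-1}\o_{k,\,2}=\sum_{n=j}^{k-1}F_{j,\,n}\bigl(Z^{(2)}_{n,\,k}|_{\sF_k}\bigr).
\]
I then run a downward induction on $j$, starting at $j=k-1$: the base case uses $F_{k-1,\,k-1}=I_{\sU_{k-1}}$ to give $Z^{(2)}_{k-1,\,k}|_{\sF_k}=\o_{k,\,2}$, and in the inductive step the induction hypothesis collapses the sum over $n=j+1,\dots,k-1$ to the single term $F_{j,\,k-1}\o_{k,\,2}$, which cancels the left-hand side and leaves $F_{j,\,j}\bigl(Z^{(2)}_{j,\,k}|_{\sF_k}\bigr)=0$, whence $Z^{(2)}_{j,\,k}|_{\sF_k}=0$ since $F_{j,\,j}=I_{\sU_j}$.

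Finally the $Z^{(1)}$-part follows by direct computation from $Z^{(1)}_{j,\,k}=H_{j,\,k}-\sum_{n=j}^{k-1}H_{j,\,n}Z^{(2)}_{n,\,k}$: on the diagonal the sum is empty, so $Z^{(1)}_{k,\,k}|_{\sF_k}=H_{k,\,k}|_{\sF_k}=\o_{k,\,1}$; for $j<k$, restricting to $\sF_k$ and inserting the behaviour of $Z^{(2)}$ just found reduces the sum to $H_{j,\,k-1}\o_{k,\,2}$, which by \eqref{intpolcond1} equals $H_{j,\,k}|_{\sF_k}$, so the two cancel and $Z^{(1)}_{j,\,k}|_{\sF_k}=0$. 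I expect the downward induction for $Z^{(2)}$ to be the one genuinely nontrivial step; everything else is bookkeeping with band-limited products together with the normalization $F_{j,\,j}=I_{\sU_j}$.
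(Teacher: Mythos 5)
Your proof is correct. The band-sum identities $F_{j,\,k}=\sum_{n=j}^{k-1}F_{j,\,n}Z^{(2)}_{n,\,k}$ and $H_{j,\,k}=\sum_{n=j}^{k}Z^{(1)}_{j,\,n}F_{n,\,k}$ are justified by \eqref{prodABC} together with \eqref{FinInv}, the forward restrictions collapse exactly as you say, and your downward induction for the $Z^{(2)}$-part of the converse goes through (the entries with $j\geq k$ being handled automatically by strict upper triangularity). The paper reaches the same conclusion by a different packaging: it introduces the diagonal embedding matrix $E$ built from the inclusions $\tau_k:\sF_k\to\sU_k$ and the (shifted) diagonal matrices $\om^{(1)},\om^{(2)}$ built from $\o_{k,\,1},\o_{k,\,2}$, and encodes the three interpolation conditions as global identities in $\eUM$: condition \eqref{intpolcond1} becomes $HE=\om^{(1)}+H\om^{(2)}$, condition \eqref{condZ12} becomes $Z^{(1)}E=\om^{(1)}$ and $Z^{(2)}E=\om^{(2)}$, and \eqref{intpolcondF} becomes $FE-E=F\om^{(2)}$. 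The forward direction then follows from the algebraic identities $H=Z^{(1)}+HZ^{(2)}$ and $F=I_\asU+FZ^{(2)}$, and---this is where the routes genuinely diverge---the converse is a one-line algebraic step: \eqref{intpolcondF} gives $F^{-1}E=E-\om^{(2)}$, whence $Z^{(2)}E=E-F^{-1}E=\om^{(2)}$ and $Z^{(1)}E=HE-H\om^{(2)}=\om^{(1)}$. Your downward induction on $j$ is precisely the entrywise unwinding of that single multiplication by $F^{-1}$, so it is avoidable, though not wrong. What your version buys is a self-contained, elementary argument that makes the band-limited structure of the products explicit; what the paper's version buys, at the cost of setting up $E$ and $\om^{(2)}$, is the elimination of the induction and a formalism that is reused verbatim later in the paper (for instance in \eqref{IP4} and the proof of Proposition \ref{pr:paramFC}), so its packaging pays dividends downstream.
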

\bpr Let $Z^{(1)}$ and $Z^{(2)}$ satisfy \eqref{Z12b} and
\eqref{Schur2}, and let $H$ and $F$ be defined by \eqref{ZtoHandF}.

We begin with a general remark about the interpolation conditions
\eqref{intpolcond1}, \eqref{condZ12}, and \eqref{intpolcondF}.
Recall that for each $k\in \BZ$ the space $\sF_k$ is a subspace of
$\sU_k$. In what follows $\t_k$ is the canonical embedding of
$\sF_k$ into $\sU_k$. Furthermore, $\asF$ will denote the Hilbert
direct sum $\oplus_{k\in \BZ}\, \sF_k$. Now let
\begin{equation}
\label{defEOm12} E=\left[E_{j,\,k}\right]_{j,\,k\in \BZ}, \quad
\om^{(1)}=\left[\om^{(1)}_{j,\,k}\right]_{j,\,k\in \BZ},\quad
\om^{(2)}=\left[\om^{(2)}_{j,\,k}\right]_{j,\,k\in \BZ},
\end{equation}
be the operator matrices defined by
\begin{equation}
\label{defE} E_{j,\,k}=\left\{\begin{array}{cl}
\t_k,&\mbox{if $j=k$},\\
0,&\mbox{if $j\not =k$},
\end{array}\right.
\end{equation}
and
\begin{equation}
\label{defOm12} \om^{(1)}_{j,\,k}=\left\{\begin{array}{cl}
\o_{k,\,1},&\mbox{if $j=k$},\\
0,&\mbox{if $j\not =k$},
\end{array}\right. \quad \om^{(2)}_{j,\,k}=\left\{\begin{array}{cl}
\o_{k,\,2},&\mbox{if $j=k-1$},\\
0,&\mbox{if $j\not =k-1$}.
\end{array}\right.
\end{equation}
Observe that both $E$ and $\om^{(1)}$ are diagonal operator
matrices, $E\in \eUM^\infty_\tu{ball}(\asF, \asU)$ and $\om^{(1)}\in
\eUM^\infty_\tu{ball}(\asF, \asY)$, while $\om^{(2)}\in
\eUM^\infty_\tu{ball,\,0}(\asF, \asU)$ is a shifted diagonal
operator, that is, all the entries of $\om^{(2)}$  are zero except
those in the first diagonal above the main diagonal. Using these
operator matrices we can restate the interpolation conditions. In
fact, we have
\begin{eqnarray}
&&\eqref{intpolcond1}\hspace{.5cm}\Longleftrightarrow \quad  HE=\om^{(1)} +H\om^{(2)},\label{IP1}\\
&&\eqref{condZ12}\hspace{.2cm}\quad\Longleftrightarrow \quad
Z^{(1)}E=\om^{(1)}\ands Z^{(2)}E=\om^{(2)},\label{IP2}\\
&&\eqref{intpolcondF}\quad\Longleftrightarrow \quad
FE-E=F\om^{(2)}.\label{IP3}
\end{eqnarray}

Now assume that the pair $(Z^{(1)}, Z^{(2)})$ satisfies the
interpolation condition \eqref{condZ12}. Since  $H$ and $F$ are
given by \eqref{ZtoHandF}, we have
\begin{eqnarray*}
&&H=Z^{(1)}(I_\asU-Z^{(2)})^{-1}=Z^{(1)}+Z^{(1)}(I_\asU-Z^{(2)})^{-1}Z^{(2)}=Z^{(1)}+HZ^{(2)},\\
\noalign{\vskip6pt}
&&F=(I_\asU-Z^{(2)})^{-1}=I_\asU+(I_\asU-Z^{(2)})^{-1}Z^{(2)}=I_\asU+FZ^{(2)}.
\end{eqnarray*}
Hence, using \eqref{IP2}, we see that
\[
HE=Z^{(1)}E+HZ^{(2)}E=\om^{(1)}+H\om^{(2)}, \quad
FE=E+FZ^{(2)}E=E+F\om^{(2)}.
\]
But then we can use \eqref{IP1} to conclude that $H$ satisfies
\eqref{intpolcond1}, and we can use \eqref{IP3} to conclude that $F$
satisfies \eqref{intpolcondF}.

Next assume that $H$ and $F$ satisfy the interpolation conditions
\eqref{intpolcond1} and \eqref{intpolcondF}, respectively. {}From
\eqref{ZtoHandF} we see that
\[
Z^{(1)}=HF^{-1} \ands Z^{(2)}=I-F^{-1}.
\]
Note that \eqref{intpolcondF} and  \eqref{IP3} imply that
$F^{-1}E=E-\om^{(2)}$. Hence
\begin{eqnarray*}
&&Z^{(1)}E=HF^{-1}E=HE-H\om^{(2)}=\om^{(1)} \quad\mbox{by
\eqref{IP1}},\\
&&Z^{(2)}E=E-F^{-1}E=\om^{(2)}.
\end{eqnarray*}
But then we can use the equivalence in \eqref{IP2} to conclude that
the pair $(Z^{(1)},Z^{(2)})$ satisfies the interpolation conditions
\eqref{condZ12} as desired. \epr

\medskip
\noindent\textbf{Proof of Theorem \ref{thmmain1}.} Let $Z^{(1)}$ and
$Z^{(2)}$ be a pair of operator matrices satisfying \eqref{Z12},
\eqref{SchurClass} and \eqref{condZ12}. Define $H$ by formula
\eqref{HZ12}. Theorem \ref{th:UM2} tells us that $H$ is well-defined
and $H\in\eUM^2_\tu{ball}(\asU,\asY)$; see formula \eqref{HFb}.
Since $(Z^{(1)},Z^{(2)})$ satisfies the interpolation conditions
\eqref{condZ12}, we see from Proposition \ref{pr:UM2intpol} that $H$
satisfies \eqref{intpolcond1}. Thus $H$ is a solution to Problem
\ref{mainprobl}.\epr

\section{Majorants of $H^*H$ and the solution to Problem \ref{probl2}}\label{sec:problem2}
\setcounter{equation}{0} In this section we solve Problem
\ref{probl2} and give a parametrization of the set of all its
solutions. The first subsection, which has a preliminary character,
deals with a time-variant version of the Cayley transform. The main
result (Theorem \ref{th:harmaj} below) is presented in the second
subsection, which is then used in the final subsection to solve
Problem \ref{probl2}.

\subsection{The Cayley
transform}\label{ssec:Ctransf}Let
$C\in\eUM^\infty_\tu{ball,0}(\asU)$. Then we know from Subsection
\ref{ssecinvUM} that $I_\asU-C$  is invertible in the algebra
$\eUM(\asU)$. It follows that $K$ given by
\begin{equation}\label{Ctransf}
K=(I_\asU+C)(I_\asU-C)^{-1}
\end{equation}
is a well-defined element of $\eUM(\asU)$. We shall refer to $K$ as
the \emph{Cayley transform} of $C$. The following proposition shows
that $K$ is positive real (see \cite{ADP99}, page 94, for a related
but somewhat less general result).

\begin{prop}
\label{prCtransf} The map $C\mapsto K$ defined by \eqref{Ctransf}
establishes a one-to-one correspondence between the set of all
operator matrices $C$ in $\eUM^\infty_\tu{ball,0}(\asU)$ and the
positive real $K\in \eUM(\asU)$ satisfying $K_{j,\,j}=I_{\sU_j}$ for
all $j\in\BZ$.
\end{prop}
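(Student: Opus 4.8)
The plan is to exhibit the candidate inverse map $K\mapsto C:=(K-I_\asU)(K+I_\asU)^{-1}$ and to verify that it and the forward map \eqref{Ctransf} are mutually inverse bijections between the two stated sets. The entire argument reduces to the level of finite sections by means of the identities \eqref{prodABC} and \eqref{FinInv} from Subsection \ref{ssecinvUM}, after which the classical Cayley transform computation on a single Hilbert space does all the work.

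First I would check that the forward map is well defined. Since $C\in\eUM^\infty_\tu{ball,0}(\asU)$ is strictly upper triangular, both $I_\asU\pm C$ have all diagonal entries equal to $I_{\sU_j}$ and are therefore invertible in $\eUM(\asU)$ by the criterion of Subsection \ref{ssecinvUM}; hence $K\in\eUM(\asU)$ is well defined. Using \eqref{prodABC} and \eqref{FinInv} one obtains, for every $j\leq k$,
\[
\de_{j,\,k}(K)=(I+\de_{j,\,k}(C))(I-\de_{j,\,k}(C))^{-1},
\]
so in particular $K_{j,\,j}=(I+0)(I-0)^{-1}=I_{\sU_j}$. Writing $T:=\de_{j,\,k}(C)$, which is a contraction by Subsection \ref{ssecM} because $\|C\|\leq1$, the standard computation with $y=(I-T)^{-1}x$ gives $\re\langle \de_{j,\,k}(K)x,x\rangle=\|y\|^2-\|Ty\|^2\geq0$, so $\re\de_{j,\,k}(K)\geq0$. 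Since taking finite sections commutes with taking adjoints and $\re$ is linear, $\de_{j,\,k}(\re K)=\re\de_{j,\,k}(K)\geq0$, and hence $K$ is positive real in the sense of Subsection \ref{ssecherm}.

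Next I would treat the inverse map. Given a positive real $K\in\eUM(\asU)$ with $K_{j,\,j}=I_{\sU_j}$, the matrix $K+I_\asU$ has diagonal entries $2I_{\sU_j}$ and is thus invertible in $\eUM(\asU)$, so $C:=(K-I_\asU)(K+I_\asU)^{-1}\in\eUM(\asU)$ is well defined with $C_{j,\,j}=(K_{j,\,j}-I)(K_{j,\,j}+I)^{-1}=0$; that is, $C$ is strictly upper triangular. Passing to finite sections exactly as above yields $\de_{j,\,k}(C)=(\de_{j,\,k}(K)-I)(\de_{j,\,k}(K)+I)^{-1}$, and the complementary Cayley computation—with $y=(\de_{j,\,k}(K)+I)^{-1}x$ one finds $\|x\|^2-\|\de_{j,\,k}(C)x\|^2=4\langle(\re\de_{j,\,k}(K))y,y\rangle\geq0$—shows that every finite section of $C$ is a contraction. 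By the norm formula in Subsection \ref{ssecM} this uniform bound forces $C\in\eUM^\infty_\tu{ball,0}(\asU)$.

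Finally, the two maps are seen to be mutually inverse through the purely algebraic identities
\[
I+C=2K(K+I)^{-1},\quad I-C=2(K+I)^{-1},
\]
together with their counterparts obtained by starting from $C$, where one uses that $C$, $K$ and $I_\asU$ commute because $K$ is a function of $C$; these give $(I+C)(I-C)^{-1}=K$ and $(K-I)(K+I)^{-1}=C$. The only point that genuinely requires care, and the main obstacle, is the systematic reduction to finite sections: one must justify that products, inverses, adjoints, and the real part all commute with the operation $\de_{j,\,k}$, which is precisely what \eqref{prodABC}, \eqref{FinInv}, and the definitions of $\re$ and of non-negativity in Subsection \ref{ssecherm} supply. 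Once this bookkeeping is in place, the Hilbert-space Cayley transform identities furnish positive realness in one direction and contractivity in the other.
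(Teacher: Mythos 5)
Your proposal is correct and follows essentially the same route as the paper's proof: well-definedness of both maps via invertibility of the diagonal entries in the algebra $\eUM(\asU)$, reduction to finite sections through \eqref{prodABC} and \eqref{FinInv}, the classical Cayley computations to get positive realness of $K$ in one direction and contractivity of all finite sections of $C$ (hence $C\in\eUM^\infty_\tu{ball,0}(\asU)$ by the norm formula of Subsection \ref{ssecM}) in the other, and finally the algebraic check that the two maps are mutually inverse. The only cosmetic difference is that you phrase the Cayley computations as quadratic-form identities, e.g. $\|x\|^2-\|\de_{j,\,k}(C)x\|^2=4\langle(\re \de_{j,\,k}(K))y,y\rangle$, whereas the paper derives the equivalent factorized operator identities such as $\de_{j,\,k}(\re K)=\left(I-\de_{j,\,k}(C)^*\right)^{-1}\left(I-\de_{j,\,k}(C)^*\de_{j,\,k}(C)\right)\left(I-\de_{j,\,k}(C)\right)^{-1}$.
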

\bpr Assume that $K$ is defined by (\ref{Ctransf}) for a
$C\in\eUM^\infty_\tu{ball,0}(\asD)$. Then
\[
\K = I_\asU + 2 C \left(I_\asU-  C \right)^{-1}.
\]
Since $(I_\asU-  C)^{-1}$ is upper triangular and $C$ is strictly
upper triangular, it follows that $K_{j,\,j}=I_{\sU_j}$ for each
$j\in\BZ$. Given an operator matrix $M$ let $\Delta(M) =
\Delta_{j,\,k}(M)$ denote the finite section of $M$ for $j \leq k$.
We have to prove (see Subsection \ref{ssecherm}) that $\Delta(\re
K)$ is non-negative.  To do this note that
\begin{eqnarray*}
2 \Delta(\re \K) &=&    (I -  \Delta(C^* ) )^{-1} (I + \Delta(C^* ))
+  (I + \Delta( C)) (I-  \Delta(C) )^{-1}\\
&=&   (I -  \Delta(C^*  ))^{-1}
\left\{(I + \Delta(C^*  ) )(I-  \Delta(C)) + \right.\\
&& \hspace{1cm}+
\left. (I -  \Delta(C^* ))(I + \Delta( C))\right\}(I-  \Delta(C) )^{-1}\\
&=& 2 (I -  \Delta(C^* ))^{-1} \left\{I - \Delta(C^* )
\Delta(C)\right\}(I-  \Delta(C) )^{-1}.
\end{eqnarray*}
In other words,
\begin{equation}\label{cay9}
 \Delta(\re \K ) = (I -  \Delta(C^* ))^{-1}
\left\{I - \Delta(C )^* \Delta( C)\right\}(I-  \Delta(C) )^{-1}.
\end{equation}
Here $I=I_{ \sU_j\oplus\cdots \oplus\sU_k}$. Because $C$ is a
contraction, $\Delta(C)$ is also a contraction. Hence   all finite
sections $\Delta(\re \K )$ of $\re \K$ are non-negative. Therefore
$\K$ is positive real.

Conversely, for a positive real matrix $\K$ in $\eUM(\asU)$
satisfying $\diag \K_{j,\,j} =I_{\sU_j}$ for $j\in\BZ$, consider the
operator matrix $C$ defined by
\begin{equation}\label{eq:invCayley}
C  = \left(\K  - I_\asU\right)\left(\K + I_\asU \right)^{-1}.
\end{equation}
We know that $K+I_\asU\in \eUM(\asU)$ and for each $j\in \BZ$ the
$j$-th diagonal element of $K+I_\asU$ is $2I_{\sU_j}$. Hence $K+I$
is invertible in $\eUM(\asU)$ (see Subsection \ref{ssecinvUM}).
Moreover, $K-I$ is in $\eUM_0(\asU)$. Thus $C$ in
\eqref{eq:invCayley} is a well defined operator matrix in
$\eUM_0(\asU)$. We claim that $C$ is in
$\eUM^\infty_\tu{ball,0}(\asU)$. To see this let $\Delta$ and $I$ be
as in the previous paragraph. Using \eqref{prodABC} and
\eqref{FinInv} we have
\[
\Delta( C) = \left(\Delta(\K)  - I\right)\left(\Delta(\K) + I
\right)^{-1}.
\]
Thus
\begin{eqnarray*}
I - \Delta( C)^*\Delta( C) &=& I - (\Delta(\K)^* +
I)^{-1}(\Delta(\K)^*  - I)
(\Delta(\K)  - I)(\Delta(\K)+ I)^{-1}\\
&=& (\Delta(\K)^*+ I)^{-1}\left\{(\Delta(\K)^*+ I) (\Delta(\K) +
I)\right.
\\&&\left. \hspace{1cm}- (\Delta(\K)^*  - I)(\Delta(\K)  - I) \right\}(\Delta(\K) + I)^{-1}\\
&=&  2 ( \Delta(\K)^*+ I)^{-1}\left(\Delta(\K)^* +  \Delta(\K) \right)(\Delta(\K) + I)^{-1}\\
&=&  4( \Delta(\K)^*+ I)^{-1}\Delta(\re \K)(\Delta(\K) + I)^{-1}\geq
0.
\end{eqnarray*}
Hence any finite section of $C$ is a contraction. Therefore  $C$ is
a contraction, and thus in $\eUM^\infty_\tu{ball,0}(\asU)$.

Finally, one easily verifies that the maps $C\mapsto K$ given by
(\ref{Ctransf}) and $K\mapsto C$ given by (\ref{eq:invCayley}) are
each others inverses. Hence the map $C\mapsto K$ defined by
\eqref{Ctransf} has the desired properties. \epr

\medskip

If $\K$ in $\eUM(\asU)$ is positive real with $K_{j,\,j}=I_{\sU_j}$
for $j\in\BZ$, then $C$ defined by (\ref{eq:invCayley}) will be
called the \emph{inverse Cayley transform} of $\K$.

\subsection{Time-variant harmonic majorants of $H^*H$}\label{ssec:harmonic}

Let $H\in\eUM^2_\tu{ball}(\asU,\asY)$ be given, and consider the
operator matrix $H^*H$ (see Subsection \ref{ssecHstH}). In the
present subsection we describe the operator matrices
$W\in\eUM(\asU)$ satisfying
\begin{equation}\label{harmaj}
\re W\geq H^*H\ands W_{j,\,j}=I_{\sU_j}\quad (j\in\BZ).
\end{equation}
This description  will be used in the next subsection  to give the
solution to Problem~\ref{probl2}.

When $W\in\eUM(\asU)$ satisfies the first identity in \eqref{harmaj}
we call $\re W$ a \emph{time-variant harmonic majorant} of $H^*H$.
In that case, since $H^*H$ is non-negative, $W$ is automatically
positive real. Time-variant harmonic majorants of $H^*H$ do exist.
In fact (see Subsection \ref{ssecHstH}) the operator matrix $V$
defined by \eqref{defV} belongs to $\eUM(\asU)$ and $\re V=H^*H$.
Thus $H^*H$ is its own time-variant harmonic majorant.

To describe all $W\in \eUM(\asU)$ satisfying \eqref{harmaj} recall
that $\asD_H$ is the Hilbert space direct sum
$\oplus_{k\in\BZ}\sD_{H_k}$, where $H_k$ is the $k$-th column of $H$
and $\sD_{H_k}$ is the corresponding defect space. The latter space
is well-defined because $H_k$ defines a contraction from $\sU_k$ in
to $\asY$. We define $\nabla_H$ and $\Pi_H$ to be the diagonal
operator matrices in $\eUM(\asD_H)$ and $\eUM(\asU,\asD_H)$,
respectively, given by
\begin{equation}\label{defNaPi}
(\nabla_H)_{j,\,k}=\left\{\begin{array}{cl} D_{H_k}&\mbox{for $j=k$},\\
0&\mbox{for $j\not=k$}.
\end{array}\right.\quad \mbox{and}\quad  (\Pi_H)_{j,\,k}=\left\{\begin{array}{cl}
\Pi_{H_k}&\mbox{for $j=k$},\\
0&\mbox{for $j\not=k$}.
\end{array}\right.
\end{equation}
In the definition of $\nabla_H$ we view $D_{H_k}$ as an operator on
$\sD_{H_k}$, and in the definition of $\Pi_H$ the operator
$\Pi_{H_k}$ is the orthogonal projection of $\sU_k$ onto
$\sD_{H_k}$. We can now state the main result of this section.

\begin{thm}\label{th:harmaj}
Let $H\in\eUM^2_\tu{ball}(\asU,\asY)$, and let $\nabla_H$ and
$\Pi_H$ be the diagonal operator matrices given by \eqref{defNaPi}.
Then all operator matrices $W\in\eUM(\asU)$ satisfying
\tu{(\ref{harmaj})} are determined by
\begin{equation}\label{eq:para}
W  = V + \Pi_H^*\nabla_H \left(I +  C \right) \left(I -  C
\right)^{-1} \nabla_H\Pi_H,
\end{equation}
where $V$ in $\eUM({\asU})$ is given by $(\ref{defV})$, and $C$ is
an arbitrary operator matrix in $\eUM^\infty_\tu{ball,0}(\asD_H)$.
Moreover, $W$ and $C$ in \tu{(\ref{harmaj})} determine each other
uniquely.
\end{thm}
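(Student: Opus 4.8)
The plan is to reduce the asserted bijection to the Cayley transform of Proposition~\ref{prCtransf}, applied on $\asD_H$ rather than on $\asU$. Put $\Theta=\nabla_H\Pi_H\in\eUM(\asU,\asD_H)$; by \eqref{defNaPi} this is a diagonal operator matrix whose $j$-th diagonal entry is $D_{H_j}$ regarded as an operator from $\sU_j$ onto (a dense subspace of) $\sD_{H_j}$, so that \eqref{eq:para} reads $W=V+\Theta^*(I+C)(I-C)^{-1}\Theta$. Since, by Proposition~\ref{prCtransf}, the map $K\mapsto C$ of \eqref{eq:invCayley} is a bijection between the positive real $K\in\eUM(\asD_H)$ with $K_{j,\,j}=I_{\sD_{H_j}}$ and the matrices $C\in\eUM^\infty_\tu{ball,0}(\asD_H)$, it suffices to prove that $W=V+\Theta^*K\Theta$ sets up a one-to-one correspondence between the $W\in\eUM(\asU)$ satisfying \eqref{harmaj} and these positive real $K$.

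First I would treat the easy inclusion. Given such a $K$, both $V$ and $\Theta^*K\Theta$ lie in $\eUM(\asU)$ (the second because $\Theta^*$ is diagonal and $K\Theta$ is upper triangular), so $W\in\eUM(\asU)$. As $\re V=H^*H$ (Subsection~\ref{ssecHstH}) and $\Theta$ is diagonal, taking real parts gives $\re W=H^*H+\Theta^*(\re K)\Theta$, and for each $j\leq k$ one has $\de_{j,\,k}\big(\Theta^*(\re K)\Theta\big)=\de_{j,\,k}(\Theta)^*\,\de_{j,\,k}(\re K)\,\de_{j,\,k}(\Theta)\geq0$ by \eqref{prodABC} and $\re K\geq0$; hence $\re W\geq H^*H$ in the sense of Subsection~\ref{ssecherm}. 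Finally, using $\ker D_{H_j}=\sD_{H_j}^\perp$, the diagonal entry is $W_{j,\,j}=(H_j)^*H_j+D_{H_j}\,I_{\sD_{H_j}}\,D_{H_j}=(H_j)^*H_j+D_{H_j}^2=I_{\sU_j}$, so \eqref{harmaj} holds.

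For the converse, given $W$ satisfying \eqref{harmaj}, set $G=W-V\in\eUM(\asU)$; then $G$ is positive real with $G_{j,\,j}=I_{\sU_j}-(H_j)^*H_j=D_{H_j}^2$. The heart of the argument is a block factorization of each finite section. Fix $j\leq k$, write $\de=\de_{j,\,k}$, and factor the positive operator $\de(\re G)=S^*S$ with $S=[\,S_j\ \cdots\ S_k\,]$ mapping into some auxiliary Hilbert space $\sK$. From $S_i^*S_i=(\re G)_{i,\,i}=D_{H_i}^2$ we get $\|S_iu\|=\|D_{H_i}u\|$, so $S_i=U_iD_{H_i}$ for an isometry $U_i$ defined on $\sD_{H_i}=\overline{\ra D_{H_i}}$. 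Then $R^{(j,k)}:=[\,U_m^*U_n\,]_{m,n}$ is non-negative with diagonal blocks $I_{\sD_{H_i}}$ and satisfies $\de(\re G)=\de(\Theta)^*R^{(j,k)}\de(\Theta)$. Because each $D_{H_i}$ has dense range in $\sD_{H_i}$, the operator $R^{(j,k)}$ is uniquely determined on the whole section space, and this uniqueness forces the $R^{(j,k)}$ to be coherent across nested sections (the sub-block of $R^{(j,k)}$ indexed by $[j',k']\subseteq[j,k]$ factors $\de_{j',k'}(\re G)$ and hence equals $R^{(j',k')}$). The entries therefore assemble into a single positive real $K\in\eUM(\asD_H)$ with $K_{j,\,j}=I_{\sD_{H_j}}$ and $\re K=R$, so that $\re G=\Theta^*(\re K)\Theta$. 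To upgrade this to $G=\Theta^*K\Theta$, put $M=G-\Theta^*K\Theta\in\eUM(\asU)$; then $\re M=0$ gives $M^*=-M$, and an upper triangular operator matrix equal to the negative of its (lower triangular) adjoint is diagonal with skew-adjoint diagonal. Since $M_{j,\,j}=D_{H_j}^2-D_{H_j}\,I_{\sD_{H_j}}\,D_{H_j}=0$, we conclude $M=0$, hence $W=V+\Theta^*K\Theta$, and $C$ is recovered as the inverse Cayley transform of $K$ via Proposition~\ref{prCtransf}. Uniqueness of $K$, and therefore of $C$, is exactly the uniqueness in the factorization, which yields the claimed one-to-one correspondence.

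I expect the main obstacle to be this factorization lemma together with its coherence across finite sections: the substantive point is to pass from the mere positivity of every section $\de_{j,\,k}(\re G)$, with prescribed diagonal blocks $D_{H_i}^2$, to one globally defined positive real $K$ with unit diagonal. It is precisely the density of the ranges of the defect operators $D_{H_k}$ that simultaneously delivers the existence of the bounded correlation matrices $R^{(j,k)}$ and their uniqueness, and hence their consistent assembly into $K$; without this density the recovery of $K$ from $\re G$ (and thus the uniqueness half of the correspondence) would fail.
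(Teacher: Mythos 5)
Your proof is correct, and your direct half coincides with the paper's: pass to the Cayley transform $K$ of $C$ via Proposition~\ref{prCtransf}, take real parts section-wise, and read off the diagonal entries. In the converse the overall skeleton also agrees with the paper --- both set $\la=W-V$, observe $\la_{j,\,j}=D_{H_j}^2$, manufacture a positive real $K\in\eUM(\asD_H)$ with unit diagonal such that $\la=\Pi_H^*\nabla_H K\nabla_H\Pi_H$, and finish with the inverse Cayley transform, with the density of the ranges of the defect operators $D_{H_k}$ carrying the uniqueness in both treatments --- but the key factorization step is genuinely different. The paper works entrywise: for each $j<k$ the non-negativity of the principal $2\times 2$ submatrix $\left[\begin{smallmatrix}2D_{H_j}^2 & \la_{j,\,k}\\ \la_{j,\,k}^* & 2D_{H_k}^2\end{smallmatrix}\right]$ is fed into the block-positivity lemma (Theorem XVI.1.1 of \cite{FF90}), which in one stroke produces a unique $K_{j,\,k}$ with $\la_{j,\,k}=D_{H_j}K_{j,\,k}D_{H_k}$; assembling these gives the full identity $\la=\Pi_H^*\nabla_H K\nabla_H\Pi_H$ directly (not just its hermitian part), and positive-realness of $K$ then follows from that of $\la$ by the same density argument you use for uniqueness. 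You instead factor each whole finite section, $\de_{j,\,k}(\re \la)=S^*S$, extract lurking isometries $U_i$ from $S_i^*S_i=D_{H_i}^2$, and form the Gram matrix $R^{(j,k)}=[U_m^*U_n]$, whose non-negativity and identity diagonal come for free; the price is the two extra steps you correctly supply: coherence of the $R^{(j,k)}$ across nested sections (again via density and entrywise uniqueness of the middle factor), and the upgrade from $\re\la=\Pi_H^*\nabla_H(\re K)\nabla_H\Pi_H$ to $\la=\Pi_H^*\nabla_H K\nabla_H\Pi_H$ via the observation that an upper triangular matrix with vanishing real part and vanishing diagonal is zero. In effect you have inlined and globalized the proof of the cited $2\times 2$ lemma: your route is self-contained and delivers the entire correlation matrix $\re K$ at once, whereas the paper's entrywise appeal to \cite{FF90} avoids both the coherence argument and the real-part upgrade and has the uniqueness of each $K_{j,\,k}$ built in. One cosmetic point: your appeal to \eqref{prodABC} for $\de_{j,\,k}\bigl(\Pi_H^*\nabla_H(\re K)\nabla_H\Pi_H\bigr)=\de_{j,\,k}(\nabla_H\Pi_H)^*\,\de_{j,\,k}(\re K)\,\de_{j,\,k}(\nabla_H\Pi_H)$ is slightly off-label, since \eqref{prodABC} is stated for upper triangular factors and $\re K$ is not upper triangular; the identity is nevertheless immediate because the outer factors are diagonal, so no gap results.
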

\bpr Let $C\in\eUM^\infty_\tu{ball,0}(\asD_H)$. Then $W$ in
(\ref{eq:para}) is equal to
\[
W=V+\Pi_H^*\nabla_H^*K\nabla_H\Pi_H,
\]
where $K$ is the Cayley transform of $C$. Using $\re K$ is
non-negative this yields
\[
\re W=\re V+\Pi_H^*\nabla_H^*\re K \nabla_H\Pi_H\geq \re V=H^*H,
\]
and we have
\[
W_{j,\,j}=V_{j,\,j}+D_{H_j}K_{j,\,j}D_{H_j}=H_j^*H_j+D_{H_j}^2=I_{\sU_j}
\quad (j\in \BZ).
\]
So $W$ satisfies (\ref{harmaj}).

To prove the converse implication,  assume that $W\in\eUM(\asU)$
satisfies (\ref{harmaj}). Since $\re V=H^*H$, the first part of
(\ref{harmaj}) implies that the real part of the operator matrix
$\la=W-V\in\eUM(\asU)$ is non-negative. The second part of
(\ref{harmaj}) gives $\la_{j,\,j}=I_{\sU_j}-H_j^*H_j=D_{H_j}^2$ for
all $j\in\BZ$. The fact that $\re\la$ is non-negative implies that
for any $j< k$ the finite section $\Delta_{j,\, k}(\re \la)$ is a
non-negative operator on $\oplus_{i=j}^k\sU_i$. In particular, the
two by two operator matrix
\[
\mat{cc}{2\la_{j,\,j}&\la_{j,\,k}\\\la^*_{j,\,k}&2\la_{k,\,k}}
=\mat{cc}{2D_{H_j}^2&\la_{j,\,k}\\\la^*_{j,\,k}&2 D_{H_k^2}}
\]
is a non-negative operator on the Hilbert direct sum $\sU_j\oplus
\sU_k$. Recall that an arbitrary operator matrix
\[
 \left[ \begin{array}{cc}
  A_1 & B^* \\
  B &  A_2 \\
\end{array}\right] \mbox{ acting on the Hilbert space direct sum} \left[ \begin{array}{cc}
  \sE_1 \\
  \sE_2 \\
\end{array}\right]
\]
is  a non-negative operator if and only if $A_1$ and $A_2$ are
non-negative and $B = A_2^{1/2} \Phi A_1^{1/2}$, where $\Phi$ is a
contraction from  $\overline{A_1 \sE_1}$ into $\overline{A_2
\sE_2}$. Moreover, in this case, $B$ and $\Phi$ uniquely determine
each other; see Theorem XVI.1.1 in \cite {FF90} for further details.
Thus there exists a unique operator $K_{j,\,k}$ from $\sD_{H_j}$
into $\sD_{H_k}$ such that $\la_{k,\,j}=D_{H_k}K_{k,\,j}D_{H_j}$.
Now set $K_{j,j}=I_{\sD_{H_j}}$ and $K_{j,\,k}=0$ for $j>k$, and put
$K=\mat{c}{K_{j,\,k}}_{j,k\in\BZ}$. Then
\[
K\in\eUM(\asD_H)\quad\mbox{and}\quad
\la=\Pi_H^*\nabla_H^*K\nabla_H\Pi_H.
\]
Since the range of $\nabla_H$ is a dense set in $\asD_H$ and $\la$
is positive real, it follows that $K$ is positive real and that
$\la$ and $K$ determine each other uniquely. Let $C$ be the inverse
Cayley transform of $K$. Then $C\in\eUM^\infty_\tu{ball,0}(\asD_H)$
and $W$ is given by (\ref{eq:para}). It also follows from the above
that $C$ and $W$ determine each other uniquely. \epr

\begin{cor}
 There is only one operator matrix $W\in\eUM(\asU)$
satisfying \tu{(\ref{harmaj})} if and only if for each $k\in\BZ$ the
$k$-th column $H_k$ of $H$ defines an isometry from $\sU_k$ into
$\asY$. In this case $W = V$, where $V$ is given by \eqref{defV}, is
the only operator matrix in $\eUM(\asU)$ satisfying
\tu{(\ref{harmaj})}.
\end{cor}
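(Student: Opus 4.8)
The plan is to deduce the corollary directly from the parametrization of Theorem~\ref{th:harmaj}. That theorem sets up a bijection $C\mapsto W$, given by \eqref{eq:para}, from $\eUM^\infty_\tu{ball,0}(\asD_H)$ onto the set of all $W\in\eUM(\asU)$ satisfying \eqref{harmaj}. Hence there is exactly one such $W$ if and only if the parameter set $\eUM^\infty_\tu{ball,0}(\asD_H)$ is a singleton, and since the zero matrix always belongs to this set, this happens if and only if $\eUM^\infty_\tu{ball,0}(\asD_H)=\{0\}$. So the whole corollary reduces to characterizing triviality of this parameter set.

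Next I would translate this into a condition on the columns of $H$. The claim to establish is that $\eUM^\infty_\tu{ball,0}(\asD_H)=\{0\}$ if and only if $\asD_H=\{0\}$, i.e.\ if and only if every defect space $\sD_{H_k}$ is trivial. Recalling that $D_{H_k}=(I_{\sU_k}-(H_k)^*H_k)^{1/2}$, the equality $\sD_{H_k}=\{0\}$ holds exactly when $(H_k)^*H_k=I_{\sU_k}$, that is, when the $k$-th column $H_k$ is an isometry from $\sU_k$ into $\asY$. Thus, once the equivalence $\eUM^\infty_\tu{ball,0}(\asD_H)=\{0\}\Longleftrightarrow\asD_H=\{0\}$ is proved, the stated equivalence follows.

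The easy direction is $\asD_H=\{0\}\Rightarrow\eUM^\infty_\tu{ball,0}(\asD_H)=\{0\}$, which is immediate since the only operator matrix on the zero space is $0$. For the supplementary assertion that then $W=V$, I would substitute the forced value $C=0$ (whose Cayley transform is the identity) into \eqref{eq:para}; because $\nabla_H$ and $\Pi_H$ now take values in, respectively map onto, $\asD_H=\{0\}$, the correction term vanishes and $W=V$. One checks consistency by noting that in this isometric case the diagonal entries $V_{j,\,j}=(H_j)^*H_j$ already equal $I_{\sU_j}$, so $V$ meets the normalization in \eqref{harmaj}.

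The hard part will be the converse implication, namely producing a nonzero admissible parameter $C$ whenever $\asD_H\neq\{0\}$. My construction would be to select two indices $j_0<k_0$ with $\sD_{H_{j_0}}\neq\{0\}$ and $\sD_{H_{k_0}}\neq\{0\}$, fix a nonzero contraction $T:\sD_{H_{k_0}}\to\sD_{H_{j_0}}$, and let $C$ be the operator matrix whose only nonzero entry is $C_{j_0,\,k_0}=T$; this $C$ is strictly upper triangular and contractive, hence a nonzero element of $\eUM^\infty_\tu{ball,0}(\asD_H)$, yielding a second solution $W\neq V$. The genuinely delicate point is the availability of \emph{two} distinct indices carrying nonzero defect spaces: the construction, and indeed the converse itself, needs separate scrutiny in the borderline situation where exactly one column fails to be an isometry, and this is the step I expect to demand the most care.
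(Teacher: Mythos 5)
Your overall route is exactly the paper's: invoke the bijection $C\mapsto W$ of Theorem \ref{th:harmaj} to reduce uniqueness of $W$ to the parameter set $\eUM^\infty_\tu{ball,0}(\asD_H)$ being a singleton (equivalently, equal to $\{0\}$), and then identify triviality of the defect spaces $\sD_{H_k}$ with the columns $H_k$ being isometries. Your forward direction is complete and correct, including the check that $C=0$ turns \eqref{eq:para} into $W=V$ and that $V$ then meets the normalization in \eqref{harmaj}; your rank-one construction of a nonzero $C$ when \emph{two} indices carry nonzero defect spaces is also fine, and both verifications are more detailed than what the paper records.

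The point you flagged at the end is, however, a genuine gap --- and in fact it cannot be closed, because the ``only if'' direction of the corollary fails there. Suppose $\sD_{H_{k_0}}\neq\{0\}$ for exactly one index $k_0$. An entry $C_{j,\,k}$ of a matrix in $\eUM^\infty_\tu{ball,0}(\asD_H)$ maps $\sD_{H_k}$ into $\sD_{H_j}$ with $j<k$, so a nonzero entry would require $\sD_{H_j}\neq\{0\}$ and $\sD_{H_k}\neq\{0\}$, forcing $j=k=k_0$ and contradicting $j<k$. Hence $C=0$ is forced, $W$ is unique, yet $H_{k_0}$ is not an isometry. A concrete instance: $\sU_k=\sY_k=\BC$ for all $k\in\BZ$, $H$ diagonal with $H_{k,\,k}=1$ for $k\neq0$ and $H_{0,\,0}=0$; then the unique $W$ satisfying \eqref{harmaj} is $I_\asU$, which moreover differs from $V$ in \eqref{defV} since $V_{0,\,0}=0$, so the second assertion of the corollary also breaks down. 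The correct characterization is that $W$ is unique if and only if at most one column of $H$ fails to be an isometry --- a boundary phenomenon of the same kind as condition \tu{(3)} in Theorem \ref{thmmain2full}. Note that the paper's own proof consists precisely of the assertion you were uneasy about (``the set is a singleton if and only if $\asD_H=\{0\}$'') and silently passes over this case; your suspicion was warranted, and by isolating the delicate step explicitly, your writeup is in this respect more accurate than the published argument.
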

\bpr The set $\eUM^\infty_\tu{ball,0}(\asD_H)$ consists of just one
element if and only if $\asD_H=\{0\}$, i.e., $\sD_{H_j}=\{0\}$ for
all $j$. The latter condition is equivalent to $H_j$ being an
isometry for each $j\in\BZ$.\epr

\subsection{All solutions to Problem \ref{probl2}} \label{ssecsolprobl2}
We now describe the solution to Problem \ref{probl2}. Fix a
$H\in\eUM^2_\tu{ball}(\asU,\asY)$. Define $N\in\eUM(\asU)$ by
\begin{equation}\label{defN}
N=\mat{c}{N_{j,k}}_{j,k\in\BZ},\quad
N_{j,\,k}=\left\{\begin{array}{cl}
(H_j)^*H_k, & \mbox{for $j\leq k$}, \\
\noalign{\vskip6pt} 0, &\mbox{for $j>k$},
\end{array}\right.
\end{equation}
Here, as before, $H_k$ is the $k$-th column of $H$.

\begin{thm}\label{th:Fexists}
Let $H\in\eUM^2_\textup{ball}(\asU,\asY)$. Define $N\in\eUM(\asU)$
by \eqref{defN}, and let the matrices $\nabla_H\in \eUM(\asD_H)$ and
$\Pi_H\in \eUM(\asD_H, \asU)$ be the diagonal operator matrices
given by \eqref{defNaPi}. For each
$C\in\eUM^\infty_\tu{ball,0}(\asD_H)$ put
\begin{equation}\label{CtoF}
F=N+\Pi_H^*\nabla_H(I_{\asD_H}-C)^{-1}\nabla_H\Pi_H.
\end{equation}
Then $F\in\eUM(\asU)$, and  the map $C\mapsto F$ defined by
\eqref{CtoF} is a one-to-one map from
$\eUM^\infty_\tu{ball,0}(\asD_H)$ onto the set of all
$F\in\eUM(\asU)$ that satisfy \eqref{major}. In particular,
$F\in\eUM(\asU)$ satisfying \eqref{major} exist. Finally, there
exists a unique $F\in\eUM(\asU)$ satisfying \eqref{major} if and
only if for each $k\in\BZ$ the $k$-th column $H_k$ of $H$ defines an
isometry from $\sU_k$ into $\asY$. In this case $F = N$ is the only
$F\in\eUM(\asU)$ satisfying \eqref{major}.
\end{thm}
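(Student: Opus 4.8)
The plan is to reduce Problem \ref{probl2} to the harmonic majorant problem \eqref{harmaj}, which has already been solved in Theorem \ref{th:harmaj}. The bridge is the affine substitution
\[
W=2F-I_\asU,\qquad\text{equivalently}\qquad F=\half(W+I_\asU),
\]
which is evidently a bijection of $\eUM(\asU)$ onto itself. First I would check that this substitution matches the two sets of side conditions. Since $(2F-I_\asU)_{j,\,j}=2F_{j,\,j}-I_{\sU_j}$, the normalization $F_{j,\,j}=I_{\sU_j}$ holds exactly when $W_{j,\,j}=I_{\sU_j}$. Likewise, from $\re W=\half(W+W^*)=F+F^*-I_\asU$ one sees that $\re W\geq H^*H$ is equivalent to $F+F^*\geq H^*H+I_\asU$. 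Thus $F$ satisfies \eqref{major} if and only if $W=2F-I_\asU$ satisfies \eqref{harmaj}.

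The main step is to verify that, under $F=\half(W+I_\asU)$, the parametrization \eqref{eq:para} of all $W$ turns into the parametrization \eqref{CtoF} of all $F$. Here I would use the elementary identity
\[
(I_{\asD_H}+C)(I_{\asD_H}-C)^{-1}=2(I_{\asD_H}-C)^{-1}-I_{\asD_H},
\]
valid since $I_{\asD_H}-C$ is invertible in $\eUM(\asD_H)$ (the matrix $C$ being strictly upper triangular). Substituting this into \eqref{eq:para} and halving gives
\[
\half(W+I_\asU)=\Pi_H^*\nabla_H(I_{\asD_H}-C)^{-1}\nabla_H\Pi_H
+\half\big(V-\Pi_H^*\nabla_H^2\Pi_H+I_\asU\big).
\]
It therefore remains to identify the constant ($C$-independent) term with $N$.

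The delicate, and most error-prone, part of the proof is precisely this bookkeeping of diagonal entries. Comparing \eqref{defV} and \eqref{defN} entrywise yields $V-2N=-\diag((H_j)^*H_j)$, while $\Pi_H^*\nabla_H^2\Pi_H=\diag(D_{H_j}^2)=\diag(I_{\sU_j}-(H_j)^*H_j)$, because $D_{H_j}^2=I_{\sU_j}-(H_j)^*H_j$ has range in $\sD_{H_j}$ and vanishes on $\sD_{H_j}^\perp$, so sandwiching by $\Pi_{H_j}$ changes nothing. Adding these identities shows $V-\Pi_H^*\nabla_H^2\Pi_H+I_\asU=2N$, so the constant term is exactly $N$ and the formula \eqref{CtoF} follows; that $F\in\eUM(\asU)$ is inherited from $W\in\eUM(\asU)$.

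With the correspondence in place I would conclude as follows. Theorem \ref{th:harmaj} asserts that $C\mapsto W$ is a bijection from $\eUM^\infty_\tu{ball,0}(\asD_H)$ onto the set of all $W\in\eUM(\asU)$ obeying \eqref{harmaj}; composing with the bijection $W\mapsto\half(W+I_\asU)$, which by the first paragraph carries solutions of \eqref{harmaj} onto solutions of \eqref{major}, shows that $C\mapsto F$ in \eqref{CtoF} is a one-to-one map onto the set of all $F\in\eUM(\asU)$ obeying \eqref{major}. Existence is then immediate because $0\in\eUM^\infty_\tu{ball,0}(\asD_H)$. For the final assertion I would invoke the corollary to Theorem \ref{th:harmaj}: the parameter set $\eUM^\infty_\tu{ball,0}(\asD_H)$ is a singleton precisely when $\asD_H=\{0\}$, i.e. when $\sD_{H_k}=\{0\}$ (equivalently $D_{H_k}=0$, i.e. $H_k$ an isometry) for every $k\in\BZ$; in that case $\nabla_H\Pi_H=0$, so \eqref{CtoF} collapses to $F=N$.
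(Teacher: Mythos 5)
Your proposal is correct and follows essentially the same route as the paper's own proof: the affine bijection $W=2F-I_\asU$, the identity $V=2N+\Pi_H^*\nabla_H^2\Pi_H-I_\asU$ to identify the constant term, and Theorem \ref{th:harmaj} together with its corollary for the bijectivity and uniqueness claims. Your rewriting of $(I_{\asD_H}+C)(I_{\asD_H}-C)^{-1}$ as $2(I_{\asD_H}-C)^{-1}-I_{\asD_H}$ is just a cosmetic rearrangement of the paper's computation, so there is nothing substantively different to flag.
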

\bpr Assume $W$ and $F$ belong to $\eUM(\asU)$ and determine each
other uniquely via
\begin{equation}\label{WtoFtoW}
W=2F-I_\asU\ands F=\half(W+I).
\end{equation}
Then $W_{j,\,j}=I_{\sU_j}$ if and only if $F_{j,\,j}=I_{\sU_j}$ for
each $j\in\BZ$. Moreover, $2\re F=\re W+I$. Hence $F$ satisfies
(\ref{HFconditions}) (where $F+F^*=2\re F$) if and only if $W$
satisfies (\ref{harmaj}).

To complete the proof it remains to show that the map $C\mapsto W$
given by (\ref{eq:para}) composed with the map $W\mapsto F$ in
(\ref{WtoFtoW}) gives the map $C\mapsto F$ in (\ref{CtoF}). Let
$C\in\eUM^\infty_\tu{ball,0}(\asD_H)$, define $W$ by (\ref{eq:para})
and $F$ by (\ref{WtoFtoW}). Notice that $V$ in (\ref{defV}) and $N$
in (\ref{defN}) are related via $V=2N+\Pi_H^*\nabla_H^2\Pi_H-I.$
Thus
\begin{eqnarray*}
W&=&V+\Pi_H^*\nabla_H(I+C)(I-C)^{-1}\nabla_H\Pi_H\\
&=&2N +\Pi_H^*\nabla_H^2\Pi_H+\Pi_H^*\nabla_H(I+C)(I-C)^{-1}\nabla_H\Pi_H-I\\
&=&2N+\Pi_H^*\nabla_H(I+C+I-C)(I-C)^{-1}\nabla_H\Pi_H-I\\
&=&2N+2\Pi_H^*\nabla_H(I-C)^{-1}\nabla_H\Pi_H-I.
\end{eqnarray*}
Hence
\begin{eqnarray*}
F&=&\half(W+I)=\half (2N+2\Pi_H^*\nabla_H(I-C)^{-1}\nabla_H\Pi_H)\\
&=&N+\Pi_H^*\nabla_H(I-C)^{-1}\nabla_H\Pi_H.
\end{eqnarray*} So $F$ is given by (\ref{CtoF}).\epr

\bigskip\noindent\textbf{A state space example.}
Consider the state space system $\{A,B,E,D\}$, where $A$ is an
operator on $\sX$ whose spectrum is contained in the open unit disc.
The input space $\asU = \oplus_{j=1}^{\ n} \sU_j$ and the output
space $\asY =\oplus_{j=1}^{\ n} \sY_j$. Furthermore,  $B$ is an
operator mapping $\asU$ into  $\sX$ and $E$ is an operator mapping
$\sX$ into $\asY$,  while $D\in {\eUM}(\asU, \asY)$ is a finite
upper triangular operator matrix mapping  $\asU= \oplus_{j=1}^{\ n}
\sU_j $ into $\asY =\oplus_{j=1}^{\ n} \sY_j$.

Now, let  $H$ be the operator matrix (consisting of $N$ doubly
infinite columns)
 of which the $k$-th column
$H_k=\textup{col}\{H_{j,\,k}\}_{j\in \BZ}$, $k=1, \ldots,n$, is
given by
\[
H_{j,\,k}=\left\{\begin{array}{cl} EA^{-j-1}B_k&\mbox{when $j<0$},\\
D_{j,\,k}&\mbox{when $j=1, \dots,n$},\\
0&\mbox{when $j>n$}.
\end{array}\right.
\]
Here $D_{j,\,k}$ is the $(j,\,k)$-th entry of the $n\ts n$ operator
matrix $D$, and $B_k$ is the restriction of $B$ to the $k$-th
component $\sU_k$ of $\asU = \oplus_{j=1}^{\ n} \sU_j$. For this $H$
we consider the finite operator matrix version of Problem
\ref{probl2}, that is, we seek all $F\in\eUM(\asU)$ satisfying
\begin{equation}
\label{FHH*2} F + F^* \geq H^*H +I_\asU \ands F_{j,j} =
I_{\sU_j}\quad(j=1,\dots,n).
\end{equation}
By setting $\sU_j=\{0\}$ for $j<0$ and $j>n$, we can identify $\asU$
and $\oplus_{j\in \BZ}\sU_j$, and we may view $H$ as an  operator
matrix in $\eM(\asU, \asE)$, where $\asE=\oplus_{j\in \BZ}\sE_j$
with
\[
\sE_j=\left\{\begin{array}
{ll}\asY&\mbox{when $j<0$ or $j>n$},\\
\sY_j &\mbox{when $j=1, \dots,n$}.
\end{array}\right.
\]
The fact that $D$ is upper triangular implies that $H\in \eUM(\asU,
\asE)$. Hence the results obtained above apply.

Since $A$ has its spectrum in the open unit disc, we know that the
Lyapunov equation $P = A^* P A + E^*E$ has a unique solution which
is given by
\[
P = \sum_{j=0}^\infty A^{*j}E^*E A^j.
\]
Using this,  we obtain $H^*H = D^*D + B^* P B$. It follows that for
each $k=1, \ldots,n$ the $k$-th column $H_k$ of $H$ induces a
contraction from $\asU$ into $\asE$  if and only if $P_{\sU_k} (D^*D
+ B^* P B)|_{\sU_k}$ is a contraction, where $P_{\sU_k}$ is the
orthogonal projection onto $\sU_k$. In other words, $H$ is in
$\eUM^2_\tu{ball}({\asU},{\asE})$  if and only if the block diagonal
entries of $D^*D + B^* P B$ are contractions.

Now assume that $H$ is in $\eUM^2_\tu{ball}({\asU},{\asE})$.  Let
$N$ be the upper triangular part of $D^*D + B^* P B$; see
\eqref{defN}. In this setting,  $D_{H_k}$ equals the positive square
root of the operator $I -  P_{\sU_k} (D^*D + B^* P B)|_{\sU_k}$, and
$\nabla_H$ is the diagonal operator  formed by $\{D_{H_k}\}_{k=1}^n$
on $\oplus_{k=1}^{\ n} \sD_{H_k}$; see \eqref{defNaPi}. Then the set
of all operators   $F\in\eUM(\asU)$ satisfying \eqref{FHH*2} is
determined by \eqref{CtoF}. Finally, $H=Z^{(1)}(I_\asU-
Z^{(2)})^{-1}$, where $Z^{(1)} = H F^{-1}$ and $Z^{(2)} = I -
F^{-1}$. Moreover, the operator matrix $\begin{bmatrix}
Z^{(1)*}&Z^{(2)*}
\end{bmatrix}$
induces a  contraction.

\section{The full version of Theorem \ref{thmmain2} and its proof}\label{sec:proof3}\setcounter{equation}{0}
The following theorem is the full version of Theorem \ref{thmmain2}.

\begin{thm}
\label{thmmain2full} Let $H$ be a solution to \textup{Problem}
\ref{mainprobl}, and let $\eC_{H,\, \o}$ be the set of all operator
matrices $C$ in $\eUM^\infty_\tu{ball,0}(\asD_H)$ defined by
\eqref{paramCH}. Fix $C\in\eC_{H,\, \o}$, and  put
\begin{equation}\label{defFC}
F=N+\Pi_H^*\nabla_H(I_{\asD_H}-C)^{-1}\nabla_H\Pi_H.
\end{equation}
Here $N\in\eUM(\asU)$ is defined by \eqref{defN}, and $\nabla_H\in
\eUM(\asD_H)$ and $\Pi_H\in \eUM(\asD_H, \asU)$ are the diagonal
operator matrices given by \eqref{defNaPi}. Then  $F\in\eUM(\asU)$,
$F$ satisfies \eqref{major}, and
\begin{equation}\label{intpolcondF2}
F_{j,\,k}|_{\sF_k}=F_{j,\,k-1}\o_{k,2}\quad(j,k\in\BZ,j<k).
\end{equation}
Put
\begin{equation}\label{Z1Z2b} Z^{(1)}= HF^{-1} \ands Z^{(2)}=
I_{{\asU}}- F^{-1}.
\end{equation}
Then the pair of operator matrices $(Z^{(1)}, Z^{(2)})$ satisfies
\eqref{Z12}, \eqref{SchurClass}, \eqref{condZ12}, and $H$ is given
by \eqref{HZ12}. Furthermore, the map $C\mapsto (Z^{(1)}, Z^{(2)})$
is a one-to one map   from the set $\eC_{H,\, \o}$ onto the set of
all  pairs $(Z^{(1)}, Z^{(2)})$ satisfying \eqref{Z12},
\eqref{SchurClass}, \eqref{condZ12}, and such that $H$ is given by
\eqref{HZ12}. In particular, there exists a unique pair of operator
matrices $(Z^{(1)}, Z^{(2)})$ satisfying \eqref{Z12},
\eqref{SchurClass} and \eqref{condZ12} such that $H$ is given by
\eqref{HZ12} if and only if one of the following three conditions is
satisfied:
\begin{itemize}
\item[\tu{(1)}] $\sF_{H_k}=\sD_{H_k}$ for each $k\in\BZ$;

\item[\tu{(2)}] $\o_{H_k}$ is a co-isometry for each $k\in\BZ$;

\item[\tu{(3)}] there exists an integer $k\in\BZ$ such that $\sF_{H_j}=\sD_{H_j}$ for each $j>k$  and
the operator $\o_{H_j}$ is a co-isometry for each $j\leq k$.
\end{itemize}
\end{thm}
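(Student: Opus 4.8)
The plan is to realize the asserted correspondence as a composition of three bijections already available from the earlier sections, the only genuinely new ingredient being the matching of the two interpolation conditions \eqref{KH} and \eqref{intpolcondF2}. Concretely, Theorem \ref{th:Fexists} gives that $C\mapsto F$ via \eqref{defFC} (which is \eqref{CtoF}) is a bijection of $\eUM^\infty_\tu{ball,0}(\asD_H)$ onto the set of all $F\in\eUM(\asU)$ satisfying \eqref{major}; Theorem \ref{th:UM2}, read with $H$ held fixed, gives that $F\mapsto(Z^{(1)},Z^{(2)})=(HF^{-1},I_\asU-F^{-1})$ is a bijection of $\{F:\eqref{major}\}$ onto the set of pairs satisfying \eqref{Z12} and \eqref{SchurClass} with $H$ recovered by \eqref{HZ12}; and Proposition \ref{pr:UM2intpol} shows that, within this fixed-$H$ fibre, such a pair satisfies \eqref{condZ12} if and only if $F$ satisfies \eqref{intpolcondF} $=$ \eqref{intpolcondF2}, since $H$ satisfies \eqref{intpolcond1} by hypothesis. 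Thus everything reduces to proving that $C\in\eC_{H,\,\o}$ (i.e.\ $C$ satisfies \eqref{KH}) if and only if the $F$ of \eqref{defFC} satisfies \eqref{intpolcondF2}. Granting this, the composed map $C\mapsto F\mapsto(Z^{(1)},Z^{(2)})$ is precisely the asserted bijection, and all the stated properties of $F$ and of the pair follow from the three cited results. Nonemptiness of $\eC_{H,\,\o}$, hence existence of a pair, is clear from the shifted-diagonal choice $C_{k-1,\,k}=\o_{H_k}\Pi_{\sF_{H_k}}$ with all other entries zero.

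To prove this equivalence --- the main obstacle --- I would recast both conditions as operator-matrix identities, in the spirit of \eqref{IP1}--\eqref{IP3}. On the $\asD_H$ side introduce $\asF_H=\oplus_k\sF_{H_k}$, the diagonal embedding $E_H\in\eUM(\asF_H,\asD_H)$ with $(E_H)_{k,\,k}$ the inclusion $\sF_{H_k}\hookrightarrow\sD_{H_k}$, and the shifted diagonal $\Omega_H\in\eUM^\infty_\tu{ball,0}(\asF_H,\asD_H)$ with $(\Omega_H)_{k-1,\,k}=\o_{H_k}$; then \eqref{KH} is exactly $CE_H=\Omega_H$. Writing $G=(I_{\asD_H}-C)^{-1}$ and letting $\mu\in\eUM(\asF,\asF_H)$ be the diagonal with $\mu_k f=D_{H_k}f$, the defect intertwining \eqref{defomHk} yields the two structural relations $\nabla_H\Pi_H E=E_H\mu$ and $\Omega_H\mu=\nabla_H\Pi_H\om^{(2)}$, where $E,\om^{(2)}$ are as in Proposition \ref{pr:UM2intpol}. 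Using these together with \eqref{intpolcond1} (in the form $H_kf=H_{k-1}\o_{k,2}f$ plus the coordinate-$k$ term $\o_{k,1}f$ for $f\in\sF_k$), a short computation of the $(j,k)$-entries gives the identity $N(E-\om^{(2)})+\Pi_H^*\nabla_H E_H\mu=E$, valid for \emph{every} solution $H$. Since $F=N+\Pi_H^*\nabla_H G\nabla_H\Pi_H$, the structural relations give $FE-F\om^{(2)}=N(E-\om^{(2)})+\Pi_H^*\nabla_H G(E_H-\Omega_H)\mu$, so subtracting the always-true identity reduces \eqref{intpolcondF2} (which by \eqref{IP3} reads $FE-E=F\om^{(2)}$) to
\[
\Pi_H^*\nabla_H\bigl(G(E_H-\Omega_H)-E_H\bigr)\mu=0.
\]
Finally, $GC=G-I_{\asD_H}$ shows that $CE_H=\Omega_H$ forces $G(E_H-\Omega_H)=E_H$, giving the forward implication; conversely, since $\Pi_H^*\nabla_H$ is diagonal with injective entries $D_{H_j}|_{\sD_{H_j}}$ and $\mu$ has dense range, the displayed vanishing forces $G(E_H-\Omega_H)=E_H$, whence $\Omega_H=CE_H$. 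This establishes the equivalence, and hence the bijection and its inverse $C\mapsto(Z^{(1)},Z^{(2)})$ of \eqref{Z1Z2b}.

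For the uniqueness assertion it suffices, by the bijection just constructed, to decide when $\eC_{H,\,\o}$ is a singleton. Here I would parametrize $\eC_{H,\,\o}$ by standard contractive completion: decomposing $\asD_H=\asF_H\oplus\asF_H^\perp$ (so $\sD_{H_k}=\sF_{H_k}\oplus\sG_k$ with $\sG_k=\sD_{H_k}\ominus\sF_{H_k}$), the contractions $C$ with $C|_{\asF_H}=\Omega_H$ are exactly $C=[\,\Omega_H\ \ D_{\Omega_H^*}\Gamma\,]$, with $\Gamma$ an arbitrary contraction from $\asF_H^\perp$ into $\overline{D_{\Omega_H^*}\asD_H}=\oplus_j\sN_j$, where $\sN_j=\overline{D_{\o_{H_{j+1}}^*}\sD_{H_j}}$ is the defect space of $\o_{H_{j+1}}^*$. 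Computing entries gives $C_{j,\,k}|_{\sG_k}=D_{\o_{H_{j+1}}^*}\Gamma_{j,\,k}$, while $C_{j,\,k}|_{\sF_{H_k}}$ is $\o_{H_k}$ for $j=k-1$ and $0$ otherwise; since $D_{\o_{H_{j+1}}^*}$ is injective on $\sN_j$, strict upper triangularity of $C$ is equivalent to $\Gamma_{j,\,k}=0$ for $j\ge k$. Thus $C\mapsto\Gamma$ is a bijection of $\eC_{H,\,\o}$ onto $\eUM^\infty_\tu{ball,0}\bigl(\oplus_k\sG_k,\oplus_j\sN_j\bigr)$, which is a singleton (only $\Gamma=0$) precisely when there is no pair $j<k$ with $\sG_k\ne\{0\}$ and $\sN_j\ne\{0\}$. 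As $\sG_k\ne\{0\}$ means $\sF_{H_k}\ne\sD_{H_k}$ and $\sN_j\ne\{0\}$ means $\o_{H_{j+1}}$ is not a co-isometry, this is exactly the existence of a threshold $k_0\in\BZ\cup\{\pm\iy\}$ with $\sF_{H_j}=\sD_{H_j}$ for $j>k_0$ and $\o_{H_j}$ a co-isometry for $j\le k_0$, i.e.\ one of conditions \tu{(1)} ($k_0=-\iy$), \tu{(2)} ($k_0=+\iy$), or \tu{(3)} (finite $k_0$). The delicate point is this last reduction: strict upper triangularity genuinely shrinks the completion set and is what permits the mixed case \tu{(3)}, so the parametrization by $\Gamma$ must be carried out carefully to confirm that triangularity forces $\Gamma$ to be strictly upper triangular and imposes nothing further.
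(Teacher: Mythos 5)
Your proposal is correct, and for the bulk of the theorem it follows the paper's own architecture: the paper likewise obtains the bijection by composing Theorem \ref{th:Fexists}, Theorem \ref{th:UM2} read fibrewise over the fixed $H$, and Proposition \ref{pr:UM2intpol}, the one new ingredient being exactly your equivalence, which is the paper's Proposition \ref{pr:paramFC}. Your proof of that equivalence is the paper's computation in different packaging: the paper rewrites $F$ in the second form \eqref{CtoF2} and proves $\tilde{N}E=N\om^{(2)}$ (formula \eqref{NtildeE}) from \eqref{intpolcond1}, whereas you absorb the same use of \eqref{intpolcond1} into your identity $N(E-\om^{(2)})+\Pi_H^*\nabla_H E_H\mu=E$; both arguments then reduce \eqref{intpolcondF2} to the vanishing of $\Pi_H^*\nabla_H\bigl(G(E_H-\Omega_H)-E_H\bigr)\mu$ with $G=(I_{\asD_H}-C)^{-1}$ (in the paper: $F\om^{(2)}-FE+E=\Pi_H^*\nabla_H(I_{\asD_H}-C)^{-1}(\nabla_H\Pi_H\om^{(2)}-C\nabla_H\Pi_HE)$), and your density-of-range/injective-diagonal step plays the role of the paper's observation that $\nabla_H(I_{\asD_H}-C)^{-1}$ and $\Pi_H^*$ act one-to-one on the space $\asD_H^+$. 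Where you genuinely depart from the paper is the uniqueness criterion. The paper proves sufficiency of \tu{(1)}--\tu{(3)} by noting that $\sF_{H_k}=\sD_{H_k}$ determines the $k$-th column of $C$ and that a co-isometric $\o_{H_k}$ determines the $(k-1)$-th row (via Corollary XXVII.5.3 of \cite{GGK93}), and proves necessity by an ad hoc one-entry perturbation of the central element, constructed under the hypothesis that $\sF_{H_k}\neq\sD_{H_k}$ and $\o_{H_{k-1}}$ is not co-isometric for one and the same $k$; as printed, that hypothesis is only the adjacent-index special case of the negation of \tu{(1)}--\tu{(3)} (it misses, e.g., the situation where $\sF_{H_{k_0}}\neq\sD_{H_{k_0}}$ and $\o_{H_{k_0}}$ fails to be co-isometric for a single index $k_0$ while all other indices are unobstructed, in which case none of \tu{(1)}--\tu{(3)} holds yet the paper's hypothesis is not met), and the construction itself carries index slips ($(k-2,k)$ versus $(k-1,k)$). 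Your row-completion parametrization $C=\bigl[\,\Omega_H\ \ D_{\Omega_H^*}\Gamma\,\bigr]$, with $\Gamma$ a free strictly upper triangular contraction from $\oplus_k\sG_k$ into $\oplus_j\sN_j$, is more systematic: it yields a complete description of $\eC_{H,\,\o}$ rather than merely the singleton criterion, identifies the true obstruction as a pair $m\le k$ with $\o_{H_m}$ not co-isometric and $\sF_{H_k}\neq\sD_{H_k}$, and then derives \tu{(1)}--\tu{(3)} cleanly by your threshold argument. The two delicate points you flag both check out: $D_{\Omega_H^*}$ is indeed diagonal (since $\Omega_H\Omega_H^*$ is diagonal with entries $\o_{H_{j+1}}\o_{H_{j+1}}^*$ on $\sD_{H_j}$), and $D_{\o_{H_{j+1}}^*}$ is injective on $\sN_j$, so strict upper triangularity of $C$ is exactly strict upper triangularity of $\Gamma$ and imposes nothing more. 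So your argument is complete, and its uniqueness half is both cleaner than, and a repair of, the paper's necessity step.
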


Let $H$ be a solution to Problem \ref{mainprobl}. Recall that, in
particular, this implies that $H$ belongs to
$\eUM^2_\textup{ball}(\asU,\asY)$. Hence Theorem \ref{th:Fexists}
applies to $H$. It will be convenient first to prove the following
proposition.

\begin{prop}
\label{pr:paramFC}Let $H$ be a solution to Problem \ref{mainprobl}.
Fix $C$ in $\eUM^\infty_\tu{ball,0}(\asD_H)$, and let
$F=[F_{j,\,k}]_{j,k\in\BZ}$ in $\eUM(\asU)$ by defined by
\eqref{defFC}. Then $F$ satisfies \eqref{intpolcondF2} if and only
if $C$ belongs to the set $\eC_{H,\, \o}$ defined by
\eqref{paramCH}.
\end{prop}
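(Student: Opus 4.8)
The plan is to push the interpolation condition \eqref{intpolcondF2} through the formula \eqref{defFC} and to convert the resulting constraint on $C$ into the membership condition \eqref{KH}, following the pattern of Proposition \ref{pr:UM2intpol}. First I would write out the entries of $F$. Since $\nabla_H$ and $\Pi_H$ are diagonal and $D_{H_k}\Pi_{H_k}=D_{H_k}$ (the defect operator annihilates $\sD_{H_k}^\perp=\ker D_{H_k}$), setting $G=(I_{\asD_H}-C)^{-1}$ yields
\[
F_{j,\,k}=N_{j,\,k}+D_{H_j}G_{j,\,k}D_{H_k},
\]
where $D_{H_k}$ is read as a map $\sU_k\to\sD_{H_k}$ and $D_{H_j}$ as a map $\sD_{H_j}\to\sU_j$. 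The first task is to check that the $N$-part already satisfies \eqref{intpolcondF2}. For $f\in\sF_k$ and $j<k$ the interpolation conditions \eqref{intpolcond1} give $H_kf=\iota_{\sY_k}\o_{k,\,1}f+H_{k-1}\o_{k,\,2}f$, and since $(H_j)^*\iota_{\sY_k}=(H_{k,\,j})^*=0$ by upper triangularity ($j<k$), this forces $N_{j,\,k}f=(H_j)^*H_{k-1}\o_{k,\,2}f=N_{j,\,k-1}\o_{k,\,2}f$. Hence \eqref{intpolcondF2} collapses to a requirement on the defect part alone, namely $D_{H_j}G_{j,\,k}D_{H_k}f=D_{H_j}G_{j,\,k-1}D_{H_{k-1}}\o_{k,\,2}f$ for all $f\in\sF_k$ and $j<k$.

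Next I would strip off the two outer defect operators. On the right, the defining relation \eqref{defomHk} gives $D_{H_{k-1}}\o_{k,\,2}f=\o_{H_k}D_{H_k}f$, so the last identity becomes $D_{H_j}\bigl(G_{j,\,k}-G_{j,\,k-1}\o_{H_k}\bigr)D_{H_k}f=0$. As $f$ ranges over $\sF_k$, the vectors $D_{H_k}f$ are dense in $\sF_{H_k}=\overline{D_{H_k}\sF_k}$, and $D_{H_j}$ restricted to $\sD_{H_j}$ is injective (its kernel is $\sD_{H_j}^\perp$, which meets $\sD_{H_j}$ only in $0$). Cancelling both factors then gives the equivalent condition on the entries of $G$,
\[
G_{j,\,k}|_{\sF_{H_k}}=G_{j,\,k-1}\o_{H_k}\qquad(j<k),
\]
which I shall call $(\star)$. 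This cancellation is the main obstacle, because both directions of the equivalence must be justified: one needs the density of $D_{H_k}\sF_k$ in $\sF_{H_k}$ (to pass from $f\in\sF_k$ to all of $\sF_{H_k}$) and the injectivity of $D_{H_j}$ on its defect space (to remove the left factor), and one must take care that no spurious diagonal constraint is produced.

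Finally I would recast $(\star)$ in operator-matrix language. Put $\asF_H=\oplus_{k\in\BZ}\sF_{H_k}$, let $E_H\in\eUM^\infty_\tu{ball}(\asF_H,\asD_H)$ be the diagonal matrix of inclusions $\t_{H,k}:\sF_{H_k}\hookrightarrow\sD_{H_k}$, and let $\om_H\in\eUM^\infty_\tu{ball,0}(\asF_H,\asD_H)$ be the shifted-diagonal matrix with $(\om_H)_{k-1,\,k}=\o_{H_k}$. By \eqref{KH}, the relation $CE_H=\om_H$ is exactly the statement $C\in\eC_{H,\,\o}$ (here I use $(CE_H)_{j,\,k}=C_{j,\,k}|_{\sF_{H_k}}$). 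On the other hand $(\star)$ says precisely that the strictly-upper-triangular entries of $G(E_H-\om_H)=GE_H-G\om_H$ agree with those of the diagonal matrix $E_H$; their lower entries vanish (all products are upper triangular) and on the diagonal $(G(E_H-\om_H))_{k,\,k}=G_{k,\,k}\t_{H,k}-G_{k,\,k-1}\o_{H_k}=\t_{H,k}=(E_H)_{k,\,k}$, using $G_{k,\,k}=I_{\sD_{H_k}}$ and $G_{k,\,k-1}=0$ (see \eqref{FinInv} and upper triangularity). Thus $(\star)$ is equivalent to $G(E_H-\om_H)=E_H$. Multiplying on the left by $G^{-1}=I_{\asD_H}-C$ turns this into $E_H-\om_H=(I_{\asD_H}-C)E_H=E_H-CE_H$, i.e. $CE_H=\om_H$, which is $C\in\eC_{H,\,\o}$. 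Chaining these three equivalences proves the proposition.
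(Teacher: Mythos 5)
Your proposal is correct and follows essentially the same route as the paper's proof: you use the same key computation (the second part of \eqref{intpolcond1} together with upper triangularity of $H$) to show that the $N$-part of \eqref{defFC} already satisfies \eqref{intpolcondF2}, and the same density/injectivity properties of the defect operators ($\overline{D_{H_k}\sF_k}=\sF_{H_k}$ and injectivity of $D_{H_j}$ on $\sD_{H_j}$), combined with the relation \eqref{defomHk} and invertibility of $I_{\asD_H}-C$ in $\eUM(\asD_H)$, to reduce the residual defect-part condition to $CE_H=\om_H$, i.e.\ to membership in $\eC_{H,\,\o}$ as defined by \eqref{paramCH}. The only difference is bookkeeping: the paper isolates the $C$-dependence globally, by multiplying the two representations \eqref{defFC} and \eqref{CtoF2} of $F$ by $\om^{(2)}$ and $E$ respectively, subtracting, and cancelling the injective factor $\Pi_H^*\nabla_H(I_{\asD_H}-C)^{-1}$ on $\asD_H^+$, whereas you work entrywise with $G=(I_{\asD_H}-C)^{-1}$ (which forces your extra, correctly handled, check that the diagonal produces no spurious constraint) and then remove $G$ by invertibility.
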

\bpr Let $H$ be a solution to Problem \ref{mainprobl}. We first show
that
\begin{equation}\label{IP4}
C\in \eC_{H,\, \o}\quad \Longleftrightarrow\quad
C\nabla_H\Pi_HE=\nabla_H\Pi_H\om^{(2)}.
\end{equation}
Here $E$ and $\om^{(2)}$ are the operator matrices in $\eUM(\asF,
\asU)$ defined by \eqref{defE} and \eqref{defOm12}. To prove
\eqref{IP4} we use \eqref{defomHk} and \eqref{KH}. {}From these
formulas it follows that the operator matrix
$C=[C_{j,\,k}]_{j,k\in\BZ}$ in $\eUM^\infty_\tu{ball,0}(\asD_H)$
belongs to $\eC_{H,\, \o}$ if and only if for each $j<k$ in $\BZ$
and each $f_k\in\sF_k$ we have
\begin{equation}
\label{entriesC} C_{j,\,k}D_{H_k}f_k=\left\{\begin{array}{cl}
D_{H_{k-1}}\o_{k,\,2}f_k & \mbox{for $j=k-1$}, \\
\noalign{\vskip6pt} 0, &\mbox{for $j\not=k-1$},
\end{array}\right.\quad f_k\in\sF_k\quad (j<k\in\BZ).
\end{equation}
In the language of operator matrices \eqref{entriesC} is equivalent
to the right hand side of \eqref{IP4}. Thus our claim follows.

By assumption $H$ belongs to $\eUM^2_\textup{ball}(\asU,\asY)$.
Hence  Theorem \ref{th:Fexists} applies to $H$. In particular, since
\eqref{CtoF}, and \eqref{defFC} are the same identities, $F$ is
well-defined and belongs to $\eUM(\asU)$. {}From \eqref{CtoF} it
follows that $F$ is also given by the following formula:
\begin{equation}
\label{CtoF2} F=I_{\asU}+\tilde
N+\Pi_H^*\nabla_H(I-C)^{-1}C\nabla_H\Pi_H,
\end{equation}
where $\tilde N=[\tilde N_{i,\,j}]\in\eUM(\asU)$ is the strictly
upper triangular operator matrix given by
\begin{equation}\label{deftilN}
\tilde N_{i,\,j}=\left\{\begin{array}{cl}
(H_i)^*H_j, & \mbox{for $i<j$}, \\
\noalign{\vskip6pt} 0, &\mbox{for $i\geq j$}.
\end{array}\right.
\end{equation}
We claim that
\begin{equation}
\label{NtildeE} \tilde{N}E=N\om^{(2)}.
\end{equation}
Note that both $\tilde{N}E$ and $N\om^{(2)}$ are strictly upper
triangular operator matrices in $\eUM(\asF, \asU)$. Furthermore, for
$j<k$ the $(j,k)$-th entry of $\tilde{N}E$ is equal to
$(H_j)^*H_k\t_k$, where $\t_k$ is the canonical embedding of $\sF_k$
into $\sU_k$. Now observe, using the second part of
\eqref{intpolcond1}, that  for $j<k$ we have
\begin{eqnarray*}
(H_j)^*H_{k}\t_k f_k&=&\sum_{\nu=-\iy}^j
H_{\nu,\,j}^*H_{\nu,\,k}f_k=\sum_{\nu=-\iy}^j
H_{\nu,\,j}^*H_{\nu,\,k-1}\o_{k,\,2}f_k\\
&=&(H_j)^*H_{k-1}\o_{k,\,2}f_k\quad (f_k\in \sF_k).
\end{eqnarray*}
But $(H_j)^*H_{k-1}\o_{k,\,2}$ is precisely the $(j,k)$-th entry of
$N\om^{(2)}$. Thus \eqref{NtildeE} is proved.

Next we use the two representations for $F$ given by \eqref{defFC}
and \eqref{CtoF2}. We multiply \eqref{defFC} and \eqref{CtoF2} from
the right by $\om^{(2)}$ and $E$, respectively, and subtract the
resulting identities. Then, using  \eqref{NtildeE}, we obtain
\[
F\om^{(2)}-FE+E=\Pi_H^*\nabla_H(I_{\asD_H}-C)^{-1}(\nabla_H\Pi_H\om^{(2)}-C\nabla_H\Pi_HE).
\]
Notice that $\nabla_H(I_{\asD_H}-C)^{-1}$ acts as a one-to-one
linear transformations on the linear space $\asD_H^+$ (see the final
paragraph of Subsection \ref{ssecinvUM} for the definition of this
space).  Since $\Pi_H^*$ is also one-to-one on vectors from
$\asD_H^+$, it follows that
\begin{equation}\label{finequiv}
F\om^{(2)}-FE+E=0 \quad \Longleftrightarrow\quad
\nabla_H\Pi_H\om^{(2)}-C\nabla_H\Pi_HE=0.
\end{equation}
By comparing the left hand side of \eqref{finequiv} with \eqref{IP3}
and the right hand side of \eqref{finequiv} with \eqref{IP4}, we see
that $F$ satisfies \eqref{intpolcondF2} if and only if $C$ belongs
to $\eC_{H,\, \o}$. \epr

\medskip\noindent\textbf{Proof of Theorem \ref{thmmain2full}.}
Let $H$ be a solution to \textup{Problem} \ref{mainprobl}. Let $F$
be given by \eqref{defFC} with $C$ from $\eC_{H,\, \o}$. By
assumption $H$ satisfies \eqref{intpolcond1}. Proposition
\ref{pr:paramFC} tells us that $F$ satisfies \eqref{intpolcondF}.
Thus we can apply Proposition \ref{pr:UM2intpol} to show that the
pair $(Z^{(1)}, Z^{(2)})$ satisfies the interpolation condition
\eqref{condZ12}. To see that the map $C\mapsto (Z^{(1)}, Z^{(2)})$
is a one-to one map from the set $\eC_{H,\, \o}$ onto the set of all
pairs $(Z^{(1)}, Z^{(2)})$ satisfying \eqref{Z12},
\eqref{SchurClass}, \eqref{condZ12} and such that $H$ is given by
\eqref{HZ12}  it  remains to apply Theorems \ref{th:UM2} and
\ref{th:Fexists}.

In order to prove  the  claim in the final part  of Theorem
\ref{thmmain2full} note that there exists a unique pair of operator
matrices $(Z^{(1)}, Z^{(2)})$ satisfying \eqref{Z12},
\eqref{SchurClass} and \eqref{condZ12} such that $H$ is given by
\eqref{HZ12} if and only if the set $\eC_{H,\, \o}$ is a singleton.

Let $C=\mat{c}{C_{i,\, j}}_{i,j\in\BZ}\in\eC_{H,\, \o}$. Fix a
$k\in\BZ$. Then observe that $\sF_{H_k}=\sD_{H_k}$ implies that the
$k$-th column of $C$ is completely determined by
\begin{equation*}
C_{i,\,k}=\left\{\begin{array}{cl}
\o_{H_k},&\mbox{if $i=k-1$},\\
0,&\mbox{if $i\not=k-1$}.
\end{array}\right.
\end{equation*}
Moreover, if $\o_{H_k}$ is a co-isometry, then we can use  Corollary
XXVII.5.3 in \cite{GGK93} to show that the $(k-1)$-th row of $C$ is
completely determined by
\begin{equation*}
C_{k-1,\,j}=\left\{\begin{array}{cl}
\o_{H_k}\Pi_{\sF_{H_k}},&\mbox{if $j=k$},\\
0,&\mbox{if $j\not=k$}.
\end{array}\right.
\end{equation*}
Here $\pi_{\sF_{H_k}}$ denotes the orthogonal projection from
$\sD_{H_k}$ onto $\sF_{H_k}$. From these two observations and the
fact that $C$ is strictly upper triangular it follows that the
conditions (1), (2) and (3) are each sufficient for $\eC_{H,\, \o}$
to be a singleton.

To see that these conditions are also necessary, assume that non of
the conditions (1), (2) or (3) is satisfied, i.e., assume there
exists a $k\in\BZ$ with $\sF_{H_k}\not=\sD_{H_k}$ and such that
$\o_{H_{k-1}}$ in not a co-isometry. In that case, set
$\sG_{H_k}=\sD_{H_k}\ominus\sF_{H_k}$ and let $D_{\o_{H_{k-1}}}$ and
$\sD_{\o_{H_{k-1}}}$ denote the defect operator and defect space of
$\o_{H_{k-1}}$, respectively. Then both $\sG_{H_k}$ and
$\sD_{\o_{H_{k-1}}}$ are not equal to $\{0\}$, and thus there exists
a non-zero contraction $N$ from $\sG_{H_k}$ into
$\sD_{\o_{H_{k-1}}}$. Now define $C=\mat{c}{C_{i,\,
j}}_{i,j\in\BZ}\in\eUM^\infty_\tu{ball,0}(\asD_{H})$ by setting
$C_{i,\,j}=0$ in case $i\not=j-1$ and $(i,\,j)\not=(k-2,k)$,
$C_{j-1,\,j}=\o_{H_j}\Pi_{\sF_{H_j}}$ for each $j\in\BZ$ and
$C_{k-1,\,k}=D_{\o_{k-1}^*}N\Pi_{\sG_{H_k}}$. One easily sees that
$C$ is in $\eC_{H,\, \o}$. Moreover, this is not the only element of
$\eC_{H,\, \o}$, because $\eC_{H,\, \o}$ always contains the
operator matrix in $\eUM^\infty_\tu{ball,0}(\asD_H)$ that has zeros
in all entries accept for the first upper diagonal on which
$\o_{H_k}\Pi_{\sF_{H_k}}$ is the entry in the $(k-1,k)$-th
position.\epr

\medskip
The arguments used to prove the claim in the final part of Theorem
\ref{thmmain2full} can also be used to derive the following
proposition.

\begin{prop}\label{P:UniqueSol}
There exists a unique pair of operator matrices $(Z^{(1)}, Z^{(2)})$
satisfying \eqref{Z12}, \eqref{SchurClass} and \eqref{condZ12} if
and only if one of the following three conditions is satisfied:
\begin{itemize}
\item[\tu{(1)}] $\sF_{k}=\sU_{k}$ for each $k\in\BZ$;

\item[\tu{(2)}] $\o_{k}$ is a co-isometry for each $k\in\BZ$;

\item[\tu{(3)}] there exists a $k\in\BZ$ such that $\sF_{j}=\sU_{j}$ for each $j>k$  and
the operator $\o_{j}$ is a co-isometry for each $j\leq k$.
\end{itemize}
In particular, if one of the conditions \tu{(1)}, \tu{(2)} or
\tu{(3)} is satisfied, then there exists a unique solution to
Problem \ref{mainprobl}.
\end{prop}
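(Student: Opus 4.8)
The plan is to prove the proposition by analysing the uniqueness of the pair $(Z^{(1)},Z^{(2)})$ directly, reusing the column/row argument from the final part of the proof of Theorem~\ref{thmmain2full}, but now applied to the combined operator matrix
\[
W=\begin{bmatrix} Z^{(1)}\\ Z^{(2)}\end{bmatrix}
\]
instead of to an element $C\in\eC_{H,\o}$. First I would observe that a pair satisfies \eqref{Z12} and \eqref{SchurClass} exactly when $W$ is a contraction whose first block row is upper triangular and whose second block row is strictly upper triangular, and that \eqref{condZ12} says precisely that for each $k$ the $k$-th column of $W$, restricted to $\sF_k$, equals $\o_k$ (with $\o_{k,1}$ in the $(k,k)$-entry of $Z^{(1)}$ and $\o_{k,2}$ in the $(k-1,k)$-entry of $Z^{(2)}$, all remaining entries of that column vanishing on $\sF_k$). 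The key point is that, in contrast with Problem~\ref{mainprobl}, this interpolation condition is \emph{decoupled} across columns, so the question reduces to the unique contractive completion of a partially prescribed operator matrix. Since \eqref{specchoice} always yields a valid pair, the solution set is nonempty, and hence it is a singleton if and only if every entry of any valid $W$ is forced by the constraints.

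Next I would isolate two determination mechanisms mirroring those in Theorem~\ref{thmmain2full}. If $\sF_k=\sU_k$, then \eqref{condZ12} prescribes the whole $k$-th column of $W$, so that column is determined. If $\o_k$ is a co-isometry, then evaluating the contraction $W^*$ on vectors $(y,u)\in\sY_k\oplus\sU_{k-1}$ and using $\|\o_k^*(y,u)\|=\|(y,u)\|$ forces $W^*(y,u)$ to sit in the $\sU_k$-slot and to equal $\o_k^*(y,u)\in\sF_k$; this is the contractive-completion uniqueness of Corollary~XXVII.5.3 in \cite{GGK93}, and it pins down the $k$-th row of $Z^{(1)}$ and the $(k-1)$-th row of $Z^{(2)}$ (namely $\o_{k,1}\Pi_{\sF_k}$ and $\o_{k,2}\Pi_{\sF_k}$ in the prescribed positions and $0$ elsewhere). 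Writing $A=\{k:\sF_k\neq\sU_k\}$ and $B=\{k:\o_k\text{ is not a co-isometry}\}$, a short bookkeeping then shows that an entry $Z^{(1)}_{j,k}$, resp.\ $Z^{(2)}_{j,k}$, can remain free only if $k\in A$ and $j\in B$, resp.\ $k\in A$ and $j+1\in B$. Hence all entries are determined precisely when $\sup A<\inf B$, and I would check that this is equivalent to the disjunction of (1)--(3): (1) reads $A=\emptyset$, (2) reads $B=\emptyset$, and (3) is the existence of an integer $k$ with $A\subseteq(-\infty,k]$ and $B\subseteq[k+1,\infty)$. This establishes the sufficiency of (1)--(3).

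For necessity I would assume that none of (1), (2), (3) holds, so that $\sup A\geq\inf B$ and there exist $b\in B$ and $a\in A$ with $b\leq a$. Then $\sU_a\ominus\sF_a\neq\{0\}$ and the defect space $\sD_{\o_b^*}$ of $D_{\o_b^*}=(I-\o_b\o_b^*)^{1/2}$ is nonzero. Imitating the construction at the end of the proof of Theorem~\ref{thmmain2full}, I would start from a valid $W$ and perturb the combined $b$-row pair $(Z^{(1)}_{b,a},Z^{(2)}_{b-1,a})$ by a nonzero operator mapping $\sU_a\ominus\sF_a$ into $\sD_{\o_b^*}\subseteq\sY_b\oplus\sU_{b-1}$; since this leaves all entries on $\sF_a$ untouched, the interpolation \eqref{condZ12} is preserved, and the positions are admissible because $b\leq a$. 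Because the perturbation feeds the column slack $\sU_a\ominus\sF_a$ into the co-isometry defect of $\o_b$, the modified $W$ is again a contraction, giving a second valid pair and contradicting uniqueness. I expect the verification that this perturbation preserves $\|W\|\leq1$ to be the main obstacle; it is exactly the step that the completion lemma of \cite{GGK93} is designed to handle.

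Finally, for the concluding clause I would use that, by Theorem~\ref{thmmain2full}, every solution of Problem~\ref{mainprobl} is induced via \eqref{HZ12} by at least one pair, while \eqref{HZ12} recovers $H$ from the pair. Thus two distinct solutions would force two distinct pairs, so uniqueness of the pair forces uniqueness of the solution. In particular, whenever one of (1), (2), (3) holds the analysis above yields a unique pair, hence a unique solution to Problem~\ref{mainprobl}.
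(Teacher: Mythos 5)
Your proposal is correct and takes essentially the same route as the paper: the paper likewise fuses the pair into a single strictly upper triangular contraction $\widetilde{Z}\in\eUM^\infty_\tu{ball,0}(\asU,\asZ)$ with $\asZ=\oplus_{i\in\BZ}(\sY_{i+1}\oplus\sU_i)$ (your pairing of the $k$-th row of $Z^{(1)}$ with the $(k-1)$-th row of $Z^{(2)}$ is exactly this interlacing) and then transplants the column/row determination and perturbation argument from the final part of Theorem \ref{thmmain2full}, including the same appeal to Corollary XXVII.5.3 of \cite{GGK93}. The one point to tighten is the step you yourself flag: the perturbing block must be taken of the precise form $D_{\o_b^*}N\Pi_{\sU_a\ominus\sF_a}$ with $N$ a nonzero contraction---mere range containment in $\sD_{\o_b^*}$ does not guarantee $XX^*\leq D_{\o_b^*}^2$ and hence does not guarantee contractivity of the perturbed row---which is exactly the completion-lemma parametrization you anticipate and the form the paper's construction uses.
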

\bpr Let $(Z^{(1)},Z^{(2)})$ be a pair of operator matrices as in
(\ref{Z12}). Set $\asZ=\oplus_{i\in\BZ} (\sY_{i+1}\oplus\sU_i)$,
and define
\[
\widetilde{Z}=\mat{c}{\widetilde{Z}_{i,\,j}}_{i,j\in\BZ}\in\eUM^\infty_0(\asU,\asZ),\
\text{where}\ \widetilde{Z}_{i,\,j}=\mat{c}{Z^{(1)}_{i+1,\,j}\\
Z^{(2)}_{i,\,j}}:\sU_j\to\mat{c}{\sY_{i+1}\\\sU_i}.
\]
Observe that (\ref{SchurClass}) is equivalent to
$\widetilde{Z}\in\eUM^\infty_\tu{ball,0}(\asU,\asZ)$, while
(\ref{condZ12}) corresponds to
\begin{equation}\label{tilZcon}
\widetilde{Z}_{j,\,k}|_{\sF_k}=\left\{\begin{array}{cl}
\o_{k},&\mbox{if $j=k-1$},\\
0,&\mbox{if $j\not =k-1$}.
\end{array}\right.
\end{equation}
Thus the pair $(Z^{(1)},Z^{(2)})$ satisfies (\ref{SchurClass}) and
(\ref{condZ12}) if and only if $\widetilde{Z}$ is an element of
\begin{eqnarray*}
&&\eC_{\o}=\{\widetilde{Z}\in \eUM^\infty_\tu{ball,0}(\asU,\asZ)
\mid
\mbox{the $(i,j)$-th entry $\widetilde{Z}_{i,\,j}$}\nonumber\\
\noalign{\vskip6pt} &&\hspace{3.5cm}\mbox{of $\widetilde{Z}$
satisfies \eqref{tilZcon} for each $i,j\in\BZ$}\}.
\end{eqnarray*}
The first statement now follows by  translating  the arguments in
the proof of the last part of Theorem \ref{thmmain2full}  to the
present setting. The last statement of Proposition \ref{P:UniqueSol}
follows immediately from the first part. \epr

\medskip
The conditions listed in the above proposition are sufficient, but
in general not necessary conditions for the existence of a unique
solution. This is already the case in the time-invariant case; see
\cite{LT06,tH2}. The problem to give necessary and sufficient
conditions for the existence of a unique solution to Problem
\ref{mainprobl} remains open.

\section{An example involving finite operator matrices and
time-variant relaxed commutant
lifting}\label{sec:examples}\setcounter{equation}{0} In this section
we present some examples of how  our results can be applied. In each
case the contractions $\o_k$, $k\in \BZ$, are not given beforehand
but are constructed from the given data. The first subsection deals
with a $4\ts 4$ operator matrix problem. In the second subsection we
introduce a time-variant analogue of the relaxed commutant lifting
problem, and show how this time-variant problem can be solved by
using Theorem \ref{thmmain1}.

\subsection{An example involving finite operator matrices}
When in Problem \ref{mainprobl} all spaces $\sU_k$ and $\sY_j$ are
set to zero, with the exception of a finite numbers of $k$'s and
$j$'s, finite operator matrix problems appear. We illustrate this
with an example.

Consider the problem of finding all $4\ts 4$ operator matrices
\begin{equation}
\label{opmat}A=\begin{bmatrix} A_{1,1}&A_{1,2}&A_{1,3}&A_{1,4}\\
A_{2,1}&A_{2,2}&A_{2,3}&A_{2,4}\\
A_{3,1}&A_{3,2}&A_{3,3}&A_{3,4}\\
A_{4,1}&A_{4,2}&A_{4,3}&A_{4,4}
\end{bmatrix}:\begin{bmatrix}
\sX_1\\
\sX_2\\
\sX_3\\
\sX_4
\end{bmatrix}\to \begin{bmatrix}
\sY_1\\
\sY_2\\
\sY_3\\
\sY_4
\end{bmatrix}
\end{equation}
such that \begin{equation}\label{condopmat}
\|\begin{bmatrix} A_{1,k}&A_{1,k+1}\\
A_{2,k}&A_{2,k+1}\\
A_{3,k}&A_{3,k+1}\\
A_{4,k}&A_{4,k+1}
\end{bmatrix}\|\leq 1, \quad k=1,2,3.
\end{equation}
The problem is always solvable (the zero matrix is a solution).
Moreover all solutions can be obtained by repeatedly applying
Parrott's lemma \cite{P78} (cf., Section IV.1 in \cite{FF90} or
Section XXVII.5 in \cite{GGK93}). Here we show that a more direct
description of the  set of all solutions  can be obtained from
Theorem \ref{thmmain1}. To do this we set $\sY_j=\{0\}$ for
$j\not=1,2,3,4$ and put
\begin{eqnarray*}
&& \sU_k=\{0\}, \quad \sF_k=\{0\}\quad (k\not =4,5,6), \\
&&\sU_4=\sX_1\oplus \sX_2, \quad \sF_4=\{0\}, \\
&&\sU_5=\sX_2\oplus \sX_3, \quad \sF_5=\sX_2\oplus \{0\},
\quad \s_{5}(x_2\oplus 0)=0\oplus x_2\quad (x_2\in \sX_2),\\
&&\sU_6=\sX_3\oplus \sX_4, \quad \sF_6=\sX_3\oplus \{0\}, \quad
\s_{6}(x_3\oplus 0)=0\oplus x_3\quad (x_2\in\sX_3).
\end{eqnarray*}
To formulate this as an upper triangular operator matrix problem,
consider
\begin{eqnarray*}
H_{1,\,4}=\begin{bmatrix}A_{1,1}&A_{1,2}\end{bmatrix}, \quad
H_{1,\,5}=\begin{bmatrix}A_{1,2}&A_{1,3}\end{bmatrix}, \quad
H_{1,\,6}=\begin{bmatrix}A_{1,3}&A_{1,4}\end{bmatrix}\\
H_{2,\,4}=\begin{bmatrix}A_{2,1}&A_{1,2}\end{bmatrix}, \quad
H_{2,\,5}=\begin{bmatrix}A_{2,2}&A_{2,3}\end{bmatrix}, \quad
H_{2,\,6}=\begin{bmatrix}A_{2,3}&A_{2,4}\end{bmatrix}\\
H_{3,\,4}=\begin{bmatrix}A_{3,1}&A_{3,2}\end{bmatrix}, \quad
H_{3,\,5}=\begin{bmatrix}A_{3,2}&A_{3,3}\end{bmatrix}, \quad
H_{3,\,6}=\begin{bmatrix}A_{3,3}&A_{3,4}\end{bmatrix}\\
H_{4,\,4}=\begin{bmatrix}A_{1,1}&A_{1,2}\end{bmatrix}, \quad
H_{4,\,5}=\begin{bmatrix}A_{1,2}&A_{1,3}\end{bmatrix}, \quad
H_{4,\,6}=\begin{bmatrix}A_{1,3}&A_{1,4}\end{bmatrix}\\
\end{eqnarray*}
Then \eqref{opmat} and \eqref{condopmat} are satisfied if and only
if
\begin{eqnarray*}
&&\hspace{2cm}\|\begin{bmatrix}
H_{1,\,k}\\
H_{2,\,k}\\
H_{3,\,k}\\
H_{4,\,k}
\end{bmatrix}\|\leq 1 \quad (k=4,5,6), \\
&&\begin{bmatrix}
H_{1,\,5}\\
H_{2,\,5}\\
H_{3,\,5}\\
H_{4,\,5}
\end{bmatrix}|_{\sF_5}=\begin{bmatrix}
H_{1,\,4}\\
H_{2,\,4}\\
H_{3,\,4}\\
H_{4,\,4}
\end{bmatrix}\s_5, \ands \begin{bmatrix}
H_{1,\,6}\\
H_{2,\,6}\\
H_{3,\,6}\\
H_{4,\,6}
\end{bmatrix}|_{\sF_5}=\begin{bmatrix}
H_{1,\,5}\\
H_{2,\,5}\\
H_{3,\,5}\\
H_{4,\,5}
\end{bmatrix}\s_6.
\end{eqnarray*}
Now consider our main problem with $\sY_j$, $\sU_k=\{0\}$ and
$\sF_k$ as above. Furthermore, put
\[
\o_{5,\,1}=0, \quad \o_{6,\,1}=0, \ands \o_{5,\,2}=\s_5\quad
\o_{6,\,2}=\s_6.
\]
Since $\sF_k=\{0\}$ for $k\not =5,6$, we don't have to consider the
operators $\o_{k,\,1}$ and $\o_{k,\,1}$ for $k\not =5,6$. Note that
\[
\begin{bmatrix}
\o_{5,\,1}\\ \o_{6,\,1}
\end{bmatrix}\ands\begin{bmatrix}
\o_{5,\,2}\\ \o_{6,\,2}
\end{bmatrix}\quad\mbox{are both contractions.}
\]
It is now clear how the problem regarding the operator matrix $A$ in
\eqref{opmat} can be solved via Theorem \ref{thmmain1}.


\subsection{A time-variant relaxed commutant lifting problem}

In this subsection we present a time-varying version of the relaxed
commutant lifting problem from \cite{FFK02} and explain the
connection with Problem \ref{mainprobl}. We plan to come back to
this time-variant relaxed commutant lifting problem in more detail
in a future publication, where we will also discuss the relation
with the time-invariant version, the three chain completion problem
\cite{FFGK97a,FFGK97b} and its weighted versions \cite{BFF01}.

A data set for the time-variant relaxed commutant lifting problem is
a set $\la=\{A_n,T'_n,U'_n,R_n,Q_n\mid n\in\BZ\}$ consisting of
Hilbert space operators with for each $k\in\BZ$
\[
T_k':\sH_{k-1}'\to\sH_k',\quad A_k:\sH_k\to\sH_k',\quad
R_k:\sH_{0,k}\to\sH_k,\quad Q_k:\sH_{0,k-1}\to\sH_k,
\]
and such that $T_k'$ and $A_k$ are contractions and
\[
T_{k}'A_{k-1}R_{k-1}=A_{k}Q_{k},\quad R_{k-1}^*R_{k-1}\leq
Q_{k}^*Q_{k}.
\]
The operator $U_k'$ is completely determined by the set of operators
$\{T_n'\mid n\in\BZ\}$ and can be seen as a time-varying analogue of
the Sz.-Nagy-Sch\"affer isometric lifting of $T_k'$; cf.,
\cite{C90}. To define $U_k'$ we set
\[
\asD'_n=\oplus_{i\leq n}\sD_{T'_{n}} \ands
\sK'_n=\sH_n\oplus\asD'_n\quad(n\in \BZ).
\]
Then $U_k'$ is the isometric operator mapping $\sK_{k-1}'$ into
$\sK_k'$ given by
\begin{equation}\label{U'k}
U_k'=\mat{cc}{T_k'&0\\E_{\sD_{T_n'}}D_{T_n'}&E_{\asD'_{k-1}}}
:\mat{c}{\sH_{k-1}'\\\asD'_{k-1}}\to\mat{c}{\sH_{k}'\\\asD'_{k}}.
\end{equation}
Here $E_{\sD_{T'_k}}$ and $E_{\asD'_{k-1}}$ are the canonical
embeddings of $\sD_{T'_k}$ and $\asD'_{k-1}$ into $\asD'_{k}$,
respectively. We then consider the following problem.

\begin{probl}\label{TV-RCLProblem}
Given the data set $\la=\{A_n,T'_n,U'_n,R_n,Q_n\mid n\in\BZ\}$,
describe the sets of operators $\{B_n\mid n\in\BZ\}$ with the
property that for each $k\in\BZ$ the operator $B_k$ is a contraction
from $\sH_k$ into $\sK_k'$ satisfying
\begin{equation}\label{reqB}
\Pi_{\sH'_k}B_k=A_k\ands U'_{k}B_{k-1}R_{k-1}=B_{k}Q_{k}.
\end{equation}
Here $\Pi_{\sH_k'}$ is the orthogonal projection from $\sK_k'$ onto
$\sH_k'$.
\end{probl}

After some translation and reduction steps it follows that the
special case of Problem \ref{TV-RCLProblem} with $\sH_{0,k}=\sH_k$
and $R_k=I_{\sH_k}$ for each $k\in\BZ$ is just the nonstationary
commutant lifting problem considered in \cite{C90} (see also Section
3.5 in \cite{C96}).

With the data set $\la$ we associate a set of contractions
$\{\o_n\mid n\in\BZ\}$ of the form (\ref{defomk}). For each
$k\in\BZ$, the contraction $\o_k$ is defined on the subspace
$\sF_k=\ov{D_{A_k}Q_{k}\sH_{0,k-1}}$ of $\sD_{A_k}$ and is given by
\[
\o_k:\sF_k\to\mat{c}{\sD_{T'_k}\\\sD_{A_{k-1}}},\quad \o_k
D_{A_k}Q_{k}=\mat{c}{D_{T'_k}A_{k-1}R_{k-1}\\D_{A_{k-1}}R_{k-1}}.
\]
We refer to $\o_k$ as the {\em $k$-th underlying contraction of the
data set $\la$}. To see that $\o_k$ is in fact a contraction observe
that for each $h\in \sH_{0,k-1}$ we have
\begin{eqnarray*}
\|D_{A_k}Q_{k} h\|^2&=&\|Q_kh\|^2-\|A_kQ_kh\|^2
\leq\|R_{k-1}h\|^2-\|T_k'A_{k-1}R_{k-1}h\|^2\\
&=&\|R_{k-1}h\|^2-\|A_{k-1}R_{k-1}h\|^2+\|A_{k-1}R_{k-1}h\|^2-\|T_k'A_{k-1}R_{k-1}h\|^2\\
&=&\|D_{A_{k-1}}R_{k-1}h\|^2+\|D_{T_k'}A_{k-1}R_{k-1}h\|^2.
\end{eqnarray*}
It follows directly from this computation that $\o_k$ is
contractive, and moreover, that $\o_k$ is an isometry if and only if
$R_{k-1}^*R_{k-1}= Q_{k}^*Q_{k}$.

{}From a variation on Parrott's lemma \cite{P78} (cf., Section IV.1
in \cite{FF90} or Section XXVII.5 in \cite{GGK93}) we obtain that an
operator $B_k$  from $\sH_k$ into $\sK_k'$ is a contraction that
satisfies the first requirement of (\ref{reqB}) if and only $B_k$ is
of the form
\begin{equation}\label{GaB}
B_k=\mat{c}{A_k\\\ga_k D_{A_k}},\quad\text{with
}\ga_k:\sD_{A_k}\to\asD'_k\text{ a contraction}.
\end{equation}
Moreover, the contraction $\ga_k$ is uniquely determined by $B_k$.
In this case, writing out $U_{k}'B_{k-1}R_{k-1}$ and $B_{k}Q_k$, it
follows that the second requirement in (\ref{reqB}) holds if and
only if $\ga_k$ and $\ga_{k-1}$ satisfy
\begin{equation}\label{sol1}
\ga_k|_{\sF_k}=E_{\sD_{T'_k}}\o_{k,1}+E_{\asD'_{k-1}}\ga_{k-1}\o_{k,2}.
\end{equation}
Here $\o_{k,1}$ is the first component of $\o_k$ mapping $\sF_k$
into $\sD_{T_k'}$ and $\o_{k,2}$ is the second component of $\o_k$
mapping $\sF_k$ into $\sD_{A_{k-1}}$. Thus, alternatively, we seek
the sets of operators $\{\ga_n\mid n\in\BZ\}$ such that for each
$k\in\BZ$ the operator $\ga_{k}$ is a contraction from $\sD_{A_{k}}$
into $\asD'_k$ and (\ref{sol1}) is satisfied.

Now set $\asD=\oplus_{k\in\BZ}\sD_{A_k}$ and
$\asD'=\oplus_{k\in\BZ}\sD_{T'_n}$. With a set of contractions
$\{B_n:\sH_n'\to\sK_n'\mid n\in\BZ\}$ satisfying the first condition
of (\ref{reqB}) we associate an operator matrix
$H\in\eUM^2_\tu{ball}(\asD,\asD')$ in the  following way.

\begin{proc}\label{pc:BtoH}
Let $\{B_n:\sH_n'\to\sK_n'\mid n\in\BZ\}$ be a set of contractions
such that the first condition of \tu{(\ref{reqB})} holds for each
$k\in\BZ$, and let $\ga_k$ be the contraction from $\sD_{A_k}$ into
$\asD_k'$ determined by \tu{(\ref{GaB})}. For each $k\in\BZ$ we view
$\asD'_k$ as a subspace of $\asD'$ and write $\Pi_{\asD'_k}$ for the
orthogonal projection from $\asD'$ onto $\asD'_k$. Define $H_k$ to
be the contraction from $\sD_{A_k}$ into $\asD'$ given by
$H_k=\Pi_{\asD'_k}^*\ga_k$ and $H\in\eUM^2_\tu{ball}(\asD,\asD')$ by
\[
H=\mat{ccccc}{\cdots&H_{-1}&H_0&H_1&\cdots}.
\]
\end{proc}

One can reverse this procedure in order to obtain a set of
contractions $\{B_n\mid n\in\BZ\}$  that satisfy (\ref{reqB}) for
each $k\in\BZ$ from a operator matrix in
$\eUM^2_\tu{ball}(\asD,\asD')$.

\begin{proc}\label{pc:HtoB}
Let $H\in\eUM^2_\tu{ball}(\asD,\asD')$. For each $k\in\BZ$ let $H_k$
be the $k$-th column of $H$, i.e., $H_k=H|_{\sD_{A_k}}$, set
$\ga_k=\Pi_{\asD_k'}H_k$ and define $B_k$ by \tu{(\ref{GaB})}.
\end{proc}

It is straightforward that $\ga_k$ defined in Procedure
\ref{pc:HtoB} is a contraction, and thus that $B_k$ is a contraction
satisfying the first requirement of (\ref{reqB}), and moreover,
these procedures are each others inverse. Furthermore, the condition
(\ref{sol1}) on the contractions $\{\ga_n\mid n\in\BZ\}$ translates
to the columns in the operator matrix $H$ obtained by Procedure
\ref{pc:BtoH} in the form
\begin{equation}\label{sol2}
H_k|_{\sF_k}=\tau_{k}'\o_{k,1}+H_{k-1}\o_{k,2},
\end{equation}
where $\tau_{k}'$ is the canonical embedding of $\sD_{T'_n}$ into
$\asD'$, or, equivalently, in the form (\ref{intpolcond1}) with
$\o_{k,1}$ and $\o_{k,2}$ as defined in the present subsection,
$\sU_k=\sD_{A_k}$ and $\sY_k=\sD_{T_k'}$ for each $k\in\BZ$. The
converse is also true. For an operator matrix
$H\in\eUM^2_\tu{ball}(\asD,\asD')$ such that the $k$-th column $H_k$
of $H$ satisfies (\ref{sol2}), the contraction $\ga_k$ obtained by
Procedure \ref{pc:HtoB} satisfies (\ref{sol1}). In conclusion, we
have the following result.

\begin{thm}\label{T:TV-RCL-sols}
Let $\la=\{A_n,T'_n,U'_n,R_n,Q_n\mid n\in\BZ\}$ be a data set as
described above. For any solution $H$ of Problem \ref{mainprobl}
with $\o_k$ equal to the $k$-th underlying contraction of $\la$ for
each $k\in\BZ$, the set $\{B_n\mid n\in\BZ\}$ obtained from
Procedure \ref{pc:HtoB} is a solution of Problem
\ref{TV-RCLProblem}. Conversely, for any solution $\{B_n\mid
n\in\BZ\}$ of Problem \ref{TV-RCLProblem}, the operator matrix $H$
obtained from Procedure \ref{pc:BtoH} is a solution of Problem
\ref{mainprobl} in case $\o_k$ in \tu{(\ref{defomk})} is the $k$-th
underlying contraction of $\la$ for each $k\in\BZ$.
 \end{thm}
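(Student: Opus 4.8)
The plan is to prove both halves of the statement by composing the two correspondences that have already been set up in this subsection, so that a solution of one problem is transported into a solution of the other step by step. The first correspondence is the Parrott-type parametrization \eqref{GaB}: under it the contractions $B_k:\sH_k\to\sK'_k$ satisfying the first requirement of \eqref{reqB} are precisely those built from contractions $\ga_k:\sD_{A_k}\to\asD'_k$, and the intertwining requirement in \eqref{reqB} is equivalent to condition \eqref{sol1} on the family $\{\ga_n\}$. The second correspondence is the pair of mutually inverse Procedures \ref{pc:BtoH} and \ref{pc:HtoB}, which identify $\{\ga_n\}$ with the columns $H_k=\Pi_{\asD'_k}^*\ga_k$ of an operator matrix $H\in\eUM^2_\tu{ball}(\asD,\asD')$, and under which \eqref{sol1} corresponds to \eqref{sol2}, that is, to the interpolation conditions \eqref{intpolcond1}. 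Once these facts are in place, the theorem follows by tracing a solution along the chain $\{B_n\}\leftrightarrow\{\ga_n\}\leftrightarrow H$.

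For the first assertion I would start from a solution $H$ of Problem \ref{mainprobl}, so that $H\in\eUM^2_\tu{ball}(\asD,\asD')$ and $H$ satisfies \eqref{intpolcond1}. Procedure \ref{pc:HtoB} assigns $\ga_k=\Pi_{\asD'_k}H_k$; as $H_k$ is a contraction, so is $\ga_k$, whence $B_k$ defined by \eqref{GaB} is a contraction satisfying the first requirement of \eqref{reqB}. It then remains to verify the intertwining requirement $U'_kB_{k-1}R_{k-1}=B_kQ_k$. By the equivalence recalled above this reduces to checking that $\{\ga_n\}$ satisfies \eqref{sol1}, which in turn follows because \eqref{intpolcond1}, equivalently \eqref{sol2}, for the columns of $H$ translates back into \eqref{sol1} for the $\ga_k$. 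Hence $\{B_n\}$ solves Problem \ref{TV-RCLProblem}.

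The converse is entirely symmetric. Starting from a solution $\{B_n\}$ of Problem \ref{TV-RCLProblem}, formula \eqref{GaB} produces contractions $\ga_k$, and the second requirement of \eqref{reqB} yields \eqref{sol1}. Procedure \ref{pc:BtoH} then sets $H_k=\Pi_{\asD'_k}^*\ga_k$; since $\ga_k$ is a contraction mapping into $\asD'_k=\oplus_{i\le k}\sD_{T'_i}$, each column $H_k$ is a contraction with range in $\asD'_k$, so $H$ is upper triangular and belongs to $\eUM^2_\tu{ball}(\asD,\asD')$. Finally \eqref{sol1} translates into \eqref{sol2}, equivalently \eqref{intpolcond1}, and therefore $H$ solves Problem \ref{mainprobl}.

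The one point carrying genuine computational content, and hence the main thing to get right, is the equivalence between \eqref{sol1} and \eqref{sol2} that underlies both directions. It rests on the bookkeeping of the nested subspaces $\asD'_{k-1}\subset\asD'_k\subset\asD'$: one must check that composing the embedding of $\sD_{T'_k}$ into $\asD'_k$ with the embedding of $\asD'_k$ into $\asD'$ gives the canonical embedding $\tau'_k$ appearing in \eqref{sol2}, while composing the embedding of $\asD'_{k-1}$ into $\asD'_k$ with that of $\asD'_k$ into $\asD'$ turns $\ga_{k-1}$ back into the column $H_{k-1}$. Since this identification, together with the fact that Procedures \ref{pc:BtoH} and \ref{pc:HtoB} are mutually inverse, has already been carried out in the discussion preceding the theorem, the remaining work is purely the assembly described above; the only care needed is to confirm that no solution is lost or gained when passing between the two problems, which is guaranteed precisely by the mutual-inverse property.
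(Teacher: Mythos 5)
Your proposal is correct and follows essentially the same route as the paper: the paper proves Theorem \ref{T:TV-RCL-sols} precisely by the discussion preceding it, namely the Parrott-type parametrization \eqref{GaB} reducing the second condition of \eqref{reqB} to \eqref{sol1}, the mutually inverse Procedures \ref{pc:BtoH} and \ref{pc:HtoB}, and the translation of \eqref{sol1} into \eqref{sol2}, i.e.\ into \eqref{intpolcond1}. Your identification of the \eqref{sol1}$\leftrightarrow$\eqref{sol2} embedding bookkeeping (via $\asD'_{k-1}\subset\asD'_k\subset\asD'$ and $\tau'_k$) as the only step with genuine content matches the paper's argument exactly.
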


{}From Theorem \ref{T:TV-RCL-sols} in combination with Theorems
\ref{thmmain1} and \ref{thmmain2} it is clear how all solutions of
Problem \ref{TV-RCLProblem} can be described. Furthermore, in
combination with the full version of the second main result, Theorem
\ref{thmmain2full}, a time-variant analogue of Theorem 1.2 in
\cite{FtHK06b} (see also Theorem 1.2 in  \cite{tH07}) is obtained.

Not only can Problem \ref{TV-RCLProblem} be seen as a special case
of Problem \ref{mainprobl}, the converse is also true, as explained
in the following theorem.

\begin{thm}\label{T:TV-RCL-converse}
For each $k\in\BZ$ let $\o_k$ be a contraction of the form
\tu{(\ref{defomk})} with $\sF_k$ a subspace of $\sU_k$. Set
\[
\begin{array}{c}
A_k=\mat{cc}{I_{\sY_{k+1}}&0}:\mat{c}{\sY_{k+1}\\\sU_k}\to\sY_{k+1},\quad
T_k'=0:\sY_k\to\sY_{k+1},\\[.4cm]
R_{k}=\mat{c}{\o_{1,k+1}\\\o_{2,k+1}}:\sF_{k+1}\to\mat{c}{\sY_{k+1}\\\sY_{k}},\quad
Q_{k}=\mat{c}{0\\\Pi_{\sF_k}^*}:\sF_k\to\mat{c}{\sY_{k+1}\\\sY_{k}},
\end{array}
\]
and define $U_k'$ by \tu{(\ref{U'k})} for each $k\in\BZ$. Here
$\Pi_{\sF_k}$ denotes the orthogonal projection from $\sU_k$ onto
$\sF_k$. Then $\la=\{A_n,T'_n,U'_n,R_n,Q_n\mid n\in\BZ\}$ is a data
set for a time-variant relaxed commutant lifting problem, and $\o_k$
is the $k$-th underlying contraction of $\la$ for each $k\in\BZ$.
\end{thm}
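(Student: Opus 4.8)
The plan is to check directly that the listed operators form an admissible data set for Problem \ref{TV-RCLProblem} and then to compute its underlying contraction and recognize it as $\o_k$. First I would record the ambient spaces dictated by the formulas: $A_k$ shows that $\sH_k=\sY_{k+1}\oplus\sU_k$ and $\sH_k'=\sY_{k+1}$, while $R_k:\sF_{k+1}\to\sH_k$ forces $\sH_{0,k}=\sF_{k+1}$, and hence $\sH_{0,k-1}=\sF_k$ (consistently with the domain of $Q_k$). Two defect operators are needed later. Since $T_k'=0$, we have $D_{T_k'}=I_{\sY_k}$ and $\sD_{T_k'}=\sY_k$. Since $A_k=\begin{bmatrix}I_{\sY_{k+1}}&0\end{bmatrix}$ is a partial isometry, $A_k^*A_k$ is the orthogonal projection onto $\sY_{k+1}$, so $D_{A_k}$ acts as $0$ on $\sY_{k+1}$ and as the identity on $\sU_k$; consequently $\sD_{A_k}=\{0\}\oplus\sU_k$, which I will identify with $\sU_k$.

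Next I would verify the two data-set axioms. Both $T_k'=0$ and the partial isometry $A_k$ are contractions. The intertwining identity $T_k'A_{k-1}R_{k-1}=A_kQ_k$ holds because each side is zero: the left side vanishes as $T_k'=0$, and the right side vanishes since the first block of $Q_k$ is zero and $A_k$ kills its second block. For $R_{k-1}^*R_{k-1}\leq Q_k^*Q_k$, note that under the identifications above $R_{k-1}$ is exactly the block column $\o_k$ of \eqref{defomk}, so $R_{k-1}^*R_{k-1}=\o_k^*\o_k$; on the other hand $Q_k^*Q_k=\Pi_{\sF_k}\Pi_{\sF_k}^*=I_{\sF_k}$. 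Thus the inequality is precisely the contractivity of $\o_k$ (with equality exactly when $\o_k$ is isometric, matching the earlier remark). Hence $\la$ is a data set and $U_k'$ is well defined via \eqref{U'k}.

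Finally I would compute the underlying contraction. For $f\in\sF_k=\sH_{0,k-1}$ one has $Q_kf=0\oplus f$, the embedding of $f$ into $\sU_k$, and since $D_{A_k}$ is the identity on the $\sU_k$ component, $D_{A_k}Q_kf=f$ in $\sD_{A_k}\cong\sU_k$; therefore the domain $\overline{D_{A_k}Q_k\sH_{0,k-1}}$ of the underlying contraction equals the given $\sF_k$. Substituting $D_{A_k}Q_kf=f$ and $R_{k-1}=\o_k$ into the defining relation of the underlying contraction, and using $A_{k-1}\o_kf=\o_{k,1}f$ with $D_{T_k'}=I$ for the first component and $D_{A_{k-1}}\o_kf=\o_{k,2}f$ (under $\sD_{A_{k-1}}\cong\sU_{k-1}$) for the second, I obtain that the underlying contraction sends $f$ to $\o_{k,1}f\oplus\o_{k,2}f=\o_kf$; since its codomain $\sD_{T_k'}\oplus\sD_{A_{k-1}}=\sY_k\oplus\sU_{k-1}$ agrees with that of $\o_k$, the two coincide. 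I expect the only genuine care---the main, if modest, obstacle---to lie in the bookkeeping: consistently tracking the index shift coupling $R_{k-1}$ with $Q_k$ in the definition of the underlying contraction, and carrying the identifications $\sD_{A_k}\cong\sU_k$ and $\sD_{T_k'}\cong\sY_k$ through the block computations so that everything lands in the spaces demanded by \eqref{defomk}.
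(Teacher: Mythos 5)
Your proof is correct and takes essentially the same route as the paper's: verify the two data-set axioms (both sides of the intertwining relation vanish, and $R_{k-1}^*R_{k-1}=\o_k^*\o_k\leq I_{\sF_k}=Q_k^*Q_k$ by contractivity of $\o_k$), then compute $D_{A_k}Q_k$, $D_{T'_k}A_{k-1}R_{k-1}$ and $D_{A_{k-1}}R_{k-1}$ under the identifications $\sD_{A_k}\cong\sU_k$, $\sD_{T'_k}=\sY_k$ to recognize the underlying contraction as $\o_k$. Your careful space-tracking even silently repairs the typo in the statement, where the codomains of $R_k$ and $Q_k$ should read $\sY_{k+1}\oplus\sU_k$ rather than $\sY_{k+1}\oplus\sY_k$.
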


\begin{proof}
Fix $k\in\BZ$. Clearly, $T'_k$ and $A_k$ are contraction. Moreover,
$T'_kA_{k-1}R_{k-1}$ and $A_{n}Q_{n}$ are both equal to the zero
operator from $\sF_k$ into $\sY_{k+1}$, and, since $R_{k-1}=\o_k$ is
a contraction, we have $R_{k-1}^*R_{k-1}\leq I_{\sF_k}=Q_k^*Q_k$. It
then follows that $\la$ is a data set for a time-variant relaxed
commutant lifting problem. Next, observe that
\[
D_{A_k}Q_k=\Pi_{\sF_k},\quad D_{T'_k}A_{k-1}R_{k-1}=\o_{1,k}\ands
D_{A_{k-1}}R_{k-1}=\o_{2,k}.
\]
This implies that the $k$-th underlying contraction of $\la$ is
equal to $\o_k$.
\end{proof}

\end{document}